\documentclass[final,onetabnum,onefignum]{siamltex}

\usepackage{amsmath,amssymb,mathtools}
\usepackage{array}
\usepackage{booktabs}
\usepackage{graphicx}
\graphicspath{{figures/}}

\providecommand{\texorpdfstring}[2]{#1}
\newtheorem{assumption}[theorem]{Assumption}
\newtheorem{construction}[theorem]{Construction}
\newtheorem{maintheorem}{Theorem}[section]
\newtheorem{remark}[theorem]{Remark}

\title{Random-Feature Kalman Filtering for Linear PDE
Data Assimilation}

\author{
Xi'an Li\thanks{Ceyear Technologies Co., Ltd, Qingdao 266555, China.}
\and
Jiale Linghu\thanks{School of Mathematics and Statistics, Xidian University, Xi'an, China.}
\and
Yangshuai Wang\thanks{Corresponding author. Department of Mathematics, National University of Singapore, Singapore (\texttt{yswang@nus.edu.sg}).}
}

\begin{document}

\maketitle

\begin{abstract}
Data assimilation for time-dependent partial differential equations
(PDEs) requires Bayesian updates of an evolving field from streaming,
sparse, and noisy observations, while keeping the filtering state finite
dimensional. We introduce a random-feature Kalman filtering framework for
linear PDE data assimilation. Once the random features are frozen and the
linear PDE is Galerkin discretized, the coefficient vector satisfies a
finite-dimensional linear-Gaussian state-space model, so the Kalman
recursion gives the exact posterior for the chosen coefficient model. For
non-orthogonal random-feature draws, we construct a mass-whitened
effective-rank coordinate system that removes near-null mass directions and
identifies the posterior dimension \(r\). For the heat equation with implicit-Euler time stepping, we prove a
high-probability posterior-contraction and PDE-consistency theorem in
these mass-whitened coordinates. The mean-square \(L^2\) reconstruction
error separates into an effective-rank feature approximation term, a
deterministic time-consistency term, and a Bayesian estimation term. In the
high-information regime, the leading posterior contribution scales as
\(r\sigma^2/N_o\), where \(\sigma^2\) is the observation-noise variance and
\(N_o\) is the number of observations per analysis time. Thus the analysis
distinguishes the exact coefficient-space posterior from deterministic PDE
approximation errors, and gives a checkable uncertainty-quantification
guarantee for random-feature filtering of a representative parabolic PDE.
\end{abstract}

\begin{keywords}
data assimilation, Kalman filter, random feature method,
posterior contraction
\end{keywords}

\begin{AMS}
62F15, 65M70, 93E11, 35K05, 65C20
\end{AMS}

\pagestyle{myheadings}
\thispagestyle{plain}
\markboth{X.~LI, J.~LINGHU, AND Y.~WANG}{RANDOM-FEATURE KALMAN FILTERING FOR LINEAR PDE DA}

\section{Introduction}

Data assimilation for time-dependent PDEs starts from a basic mismatch:
the unknown state is an evolving field, whereas the data arrive as sparse,
noisy, and indirect measurements. Such problems arise in atmosphere and ocean
models, subsurface flow, seismic wave propagation, biomedical transport, and
engineering design. In Bayesian data assimilation, the target is not only a
state estimate, but the filtering distribution of the evolving state
conditioned on the observations
\cite{stuart2010inverse,dashti2017bayesian,evensen2009data,law2015data,sanzalonso2023inverse}.

For PDE models this creates a concrete UQ bottleneck. The state is
infinite-dimensional, observations arrive sequentially, sensors are sparse, and
matrix assembly may involve a large collocation or quadrature design. A useful
Bayesian filter should therefore keep a closed-form posterior covariance,
remain online, and have posterior dimension determined by the approximation
geometry rather than by the number \(N_o\) of observations at each analysis
time or the collocation count \(N_c\). Without such a coefficient state,
posterior uncertainty is tied either to ensemble sampling, batch optimization,
or a data-grid-sized kernel representation.

Existing approaches resolve different parts of this bottleneck. Ensemble
Kalman filters
\cite{evensen1994sequential,bishop2001adaptive,houtekamer2016review,iglesias2013ensemble,silva2025adaptive}
are sequential and scalable but
represent covariance through an ensemble. Variational methods such as 4DVar
optimize over an assimilation window
\cite{talagrand1987variational,courtier1994strategy} and recover covariance
through additional second-order calculations. Neural-Galerkin and shape-morphing parametrizations
\cite{aghili2024dynamical,hilliard2025sequential,ning2025filtered}
use nonlinear parameter manifolds, while Gaussian-process PDE solvers
\cite{raissi2018numerical,chen2021solving,bai2024gaussian}
keep Gaussian
structure in a kernel state tied to the collocation design. Reduced-basis and
POD filtering \cite{silva2025adaptive,iqbal2024parallel} reduce dimension
through a precomputed basis. For linear PDEs, these methods expose a narrow
but consequential UQ gap: conjugacy should be available if the right finite
coefficient state can be identified. The question is whether one can construct
a streaming coefficient model whose Bayesian update remains conjugate, whose
state dimension is decoupled from the sensor and collocation counts, and whose
posterior contraction can be quantified as observations accumulate.

The random feature method (RFM)
\cite{barron1993universal,rahimi2008random,bach2017equivalence,rudi2017generalization,chen2022bridging,chen2023timedependent,ming2025spectral}
provides such a coefficient space. Draw features
\(\{\varphi_j(\cdot;\omega_j)\}_{j=1}^M\) once and freeze them. The
ansatz
\[
u^M(x,t)=\Phi(x)^\top c(t),
\qquad
\Phi=(\varphi_1,\ldots,\varphi_M)^\top,
\]
is then linear in the coefficient vector \(c(t)\in\mathbb R^M\). Writing
\(c_n\) for the coefficient vector at analysis time \(t_n\), a linear PDE
with linear observations gives an affine state-space model
\[
c_{n+1}=F_nc_n+g_n+w_n,\qquad
y_n=H_nc_n+\varepsilon_n .
\]
Here \(y_n\) is the observation vector, \(F_n\) and \(H_n\) are the
coefficient propagator and observation matrix, \(g_n\) is the deterministic
forcing contribution, and \(w_n,\varepsilon_n\) denote model and observation
noise. With a Gaussian prior and Gaussian noise, the Kalman recursion gives
the posterior for the chosen coefficient model. The key point is that the
randomization is frozen before filtering: conditional on this draw, the model
is finite-dimensional and linear-Gaussian in the coefficient vector. The state
dimension is the raw feature count \(M\), or the effective rank \(r\le M\)
after mass whitening for non-orthogonal feature draws, and is separate from
\(N_o\) and \(N_c\).

This paper develops the above observation into a Bayesian random-feature
Kalman filter for linear PDE data assimilation. First, we formulate the
frozen-feature Galerkin system as a linear-Gaussian coefficient
state-space model and record the resulting exact Kalman posterior. Second,
we introduce mass-whitened effective-rank coordinates, which replace the
ill-conditioned raw random-feature coefficients by an orthonormal analysis
state of dimension \(r\). Third, for the heat equation with implicit-Euler
time stepping, we prove a posterior-contraction and PDE-consistency theorem
whose mean-square \(L^2\) reconstruction error separates into feature
approximation, time discretization, and Bayesian estimation. On the
theorem's high-probability event, the Bayesian-estimation term has leading
high-information scaling \(r\sigma^2/N_o\), where \(\sigma^2\) is the
observation-noise variance. The posterior statement is exact in coefficient
space; the comparison with the exact heat trajectory is obtained by adding
deterministic approximation and time-consistency terms.

The numerical experiments in Section~\ref{sec:numerics} follow the same
decomposition: they confirm the predicted heat-equation scalings, check
posterior calibration, compare with the ensemble-transform Kalman filter on a
streaming 2D heat benchmark, and test the coefficient-space filtering
mechanism on a non-self-adjoint advection-diffusion problem. Feature-family
specializations record inputs for Gaussian random Fourier features, shallow
ELU random features, and a Dirichlet-bubble construction for homogeneous
boundary conditions. The Supplementary Material records conditional extensions
to Caputo subdiffusion and to a linearized Darcy permeability inverse problem.

In the static one-shot limit, the construction reduces to Bayesian
PIELM \cite{liu2023bayesian}. Here the RFM coefficients evolve through a
PDE-induced state-space model, the Kalman recursion gives the
coefficient-space posterior, and the heat equation admits a quantitative
posterior-contraction/PDE-consistency theorem in effective-rank coordinates.
Table~\ref{tab:positioning} summarizes the relation to the closest prior work.

\begin{table}[t]
\centering
\footnotesize
\setlength{\tabcolsep}{4pt}
\begin{tabular}{p{0.20\linewidth}p{0.32\linewidth}p{0.36\linewidth}}
\toprule
Prior work & Update type & Present distinction \\
\midrule
BPIELM \cite{liu2023bayesian} & Closed-form, static & Streaming coefficient posterior \\
RAFDA \cite{gottwald2021supervised} & EnKF on learnt propagator & Exact coefficient posterior \\
NG filtering \cite{aghili2024dynamical,hilliard2025sequential,ning2025filtered} & Gauss-Newton on NN parameters & Closed-form coefficient posterior \\
GP/IEKS \cite{iqbal2024parallel} & Gauss-Newton MAP & Effective-rank coefficient state \\
RB-EnKF \cite{silva2025adaptive} & EnKF on POD modes & Random-feature coefficient posterior \\
Numerical GP \cite{raissi2018numerical,chen2021solving,bai2024gaussian} & GP-PDE with kernel state & Dimension decoupled from \(N_c,N_o\) \\
iPINNER \cite{lu2025ipinner} & PINN + EnKF, iterated & Linear-Gaussian coefficient recursion \\
\bottomrule
\end{tabular}
\caption{Positioning against the closest prior work in PDE-informed
Bayesian data assimilation. The proposed framework gives a closed-form
Kalman update and exact coefficient-space posterior on effective-rank
random-feature coefficients, with state dimension independent of \(N_c\) and
\(N_o\).}
\label{tab:positioning}
\end{table}

The rest of the paper is organized as follows.
Section~\ref{sec:framework} fixes the PDE class, the RFM Galerkin
discretization, and the Kalman recursion.
Section~\ref{sec:theory} develops the mass-whitened effective-rank state space,
proves the linear-heat posterior contraction theorem with
traceable constants, and records the family-specific corollaries.
Section~\ref{sec:numerics} reports the numerical experiments, and
Section~\ref{sec:discussion} closes with operating regimes and
implications.

\section{The Random-Feature Kalman Filtering Framework}
\label{sec:framework}

This section isolates the finite-dimensional object for which the Bayesian
statement is exact. We first recall Bayesian filtering for data assimilation
(\S\ref{sec:bayesian-filtering}), then build an RFM Galerkin coefficient model
for a linear PDE (\S\ref{sec:rfm-trial-space}), and finally record the Kalman
recursion for the resulting linear-Gaussian state space
(\S\ref{sec:exact-kalman}).

\subsection{Bayesian filtering for PDE data assimilation}
\label{sec:bayesian-filtering}

Data assimilation infers a time-dependent state of a specified
dynamical model from streams of noisy, sparse observations. In
discrete time and finite dimension, the model is a state-space
pair
\begin{equation}\label{eq:state-space}
x_{n+1}=\mathcal F_n(x_n)+w_n,
\qquad
y_n=\mathcal H_n(x_n)+\varepsilon_n,
\end{equation}
with state \(x_n\in\mathbb R^{d_x}\), observation
\(y_n\in\mathbb R^{N_o}\), process noise
\(w_n\sim\mathcal N(0,Q_n)\), observation noise
\(\varepsilon_n\sim\mathcal N(0,R_n)\), and a Gaussian prior
\(x_0\sim\mathcal N(m_0,P_0)\). The filtering distribution is
\(\pi_n(\cdot)=\mathbb P(x_n\mid y_{0:n})\). For linear
\(\mathcal F_n\) and \(\mathcal H_n\), it remains
Gaussian and is computed by the Kalman filter
\cite{kalman1960new,anderson1979optimal,sanzalonso2023inverse}.

PDE data assimilation specializes \eqref{eq:state-space} to the
infinite-dimensional setting where the state is a function
\(u(\cdot,t_n)\) on a spatial domain \(\Omega\) governed by a
partial differential operator. For the construction below, the relevant
distinction is between ensemble methods
\cite{silva2025adaptive}, which approximate the filtering
distribution by a Monte Carlo sample with sampling error and
covariance-inflation choices, and discretize-then-filter
approaches, which Galerkin-project the PDE onto a
finite-dimensional trial space and apply a Kalman filter on the resulting
coefficient state \cite{thomee2006galerkin}. For a fixed linear coefficient model with Gaussian
noise, conjugacy gives the Bayesian update in closed form. The trial space
therefore determines both the numerical approximation and the dimension of
the posterior state.

\subsection{The random feature method as the trial space}
\label{sec:rfm-trial-space}

The random feature method
\cite{rahimi2008random,chen2022bridging,chen2023timedependent,ming2025spectral}
parametrizes a function on \(\Omega\) by a fixed-size linear
combination
\begin{equation}\label{eq:rfm-ansatz}
u^M(x,t)
=\sum_{j=1}^M c_j(t)\,\varphi_j(x;\omega_j)
=\Phi(x)^\top c(t),
\qquad
c(t)\in\mathbb R^M,
\end{equation}
where the features \(\{\varphi_j(\cdot;\omega_j)\}_{j=1}^M\) are
sampled once from a distribution
\(\omega_j\stackrel{\rm iid}{\sim}\pi\) and \emph{frozen} for the
remainder of the computation, and
\(\Phi=(\varphi_1,\ldots,\varphi_M)^\top\) is the
column-stacked feature vector. Examples below include Gaussian random
Fourier features and shallow random networks, with deterministic
eigenbases used as reference trial spaces. Once the draw is taken,
\(V_M=\operatorname{span}\{\varphi_j\}_{j=1}^M\) is fixed and
finite-dimensional. When a conforming Galerkin stiffness matrix is
formed, we assume \(V_M\subset D(a)\), where \(a\) is the closed
bilinear form associated with \(-\mathcal L\). Nonconforming assembled
stiffness matrices are treated as part of the finite-dimensional
coefficient model, with stability supplied as an explicit input. When
point observations are used, the features are taken with pointwise
representatives on the observation set.

\subsubsection{PDE class and Galerkin discretization}
\label{ssec:framework-pde-galerkin}

Fix a working domain \(\Omega\subset\mathbb R^d\) (bounded with
\(C^{1,1}\) boundary), a final time \(T>0\), and a uniform
analysis grid \(t_n=n\Delta t\) for
\(0\le n\le N=T/\Delta t\). Throughout, \(N_o\) denotes the
number of observations per analysis time, \(N_c\) denotes a
collocation or quadrature count when such points are used to assemble
matrices, and ensemble sizes in the ETKF benchmarks are denoted by
\(N_e\). The prototype problem is the linear
parabolic equation
\begin{equation}\label{eq:pde-class}
\partial_t u(x,t)=\mathcal Lu(x,t)+f(x,t),
\qquad u|_{\partial\Omega}=0,
\qquad u(\cdot,0)=u_0\in L^2(\Omega),
\end{equation}
with linear, uniformly elliptic, dissipative generator
\(\mathcal L\) on \(L^2(\Omega)\); the running example is
\(\mathcal L=\nu\Delta\) with fixed diffusivity \(\nu>0\) (heat
operator). The diffusivity is part of the bilinear form and is therefore
absorbed into the stiffness matrix. When a positive operator is needed,
we write \(A=-\mathcal L\succeq0\), with effective-rank counterpart \(A_\tau\)
in Section~\ref{ssec:contraction-theorem}. With
\(f\in L^\infty(0,T;L^2(\Omega))\), the Galerkin mass and stiffness
matrices are
\(\mathbb M_{kl}=\int_\Omega\varphi_k\varphi_l\,dx\) and
\(\mathbb K_{kl}=a(\varphi_l,\varphi_k)\). Here
\(a(u,v)=-(\mathcal L u,v)_{L^2}\) when the strong form is valid,
and \(a(u,v)=\nu\int_\Omega\nabla u\cdot\nabla v\,dx\) for
conforming homogeneous-Dirichlet heat. The coefficient equation is
\(\mathbb M\dot c(t)=-\mathbb K c(t)+\tilde f(t)\), with
\(\tilde f_k(t)=\int_\Omega\varphi_k f(\cdot,t)\,dx\). For
self-adjoint dissipative \(\mathcal L\), \(\mathbb K\) is
symmetric positive semi-definite on conforming spaces; otherwise
the assembled coefficient model is treated through the explicit
finite-horizon stability input used in Section~\ref{sec:theory}. Section
\ref{sec:theory} normalizes the \(L^2\) inner product by \(|\Omega|^{-1}\);
this rescales the mass, stiffness, and load matrices consistently and only
changes constants.

\subsubsection{Time integration and observation}
\label{ssec:framework-time-observation}

An implicit-explicit Runge-Kutta scheme of order \(p\) applied
to the semi-discrete ODE gives, in the chosen coefficient coordinate,
\(c(t_{n+1})=F_n c(t_n)+g_n+\rho_n\), with
\(\|\rho_n\|_2\le C_{\rm time}(\Delta t)^{p+1}\). The filtering state
\(c_n\) follows the numerical recursion obtained by dropping
\(\rho_n\); Section~\ref{sec:theory} accounts for this term as
time-discretization error, while the state-space model may include
explicit process noise \(w_n\). For implicit Euler,
\(F=(\mathbb M+\Delta t\,\mathbb K)^{-1}\mathbb M\), when
\(\mathbb M+\Delta t\,\mathbb K\) is nonsingular, is
mass-norm contractive for \(\mathbb K\succeq0\). The effective-rank
mass-whitening construction removes near-null mass directions and supplies
the Euclidean bound used in the contraction analysis.
Point observations at sensor
locations \(\{X_i\}_{i=1}^{N_o}\subset\Omega\) and analysis
times \(t_n\) give the linear measurement equation
\(y_n=H_n c_n+\varepsilon_n\) with \((H_n)_{ij}=\varphi_j(X_i)\)
and \(\varepsilon_n\stackrel{\rm iid}{\sim}
\mathcal N(0,\sigma^2I_{N_o})\). Allowing \(H_n\) to vary with
\(n\) accommodates random observation locations.

\subsection{Kalman recursion for the coefficient model}
\label{sec:exact-kalman}

Condition on the frozen feature draw, quadrature or collocation rule, and
observation design. The coefficient state-space model is
\begin{equation}\label{eq:coefficient-state-space}
c_{n+1}=F_nc_n+g_n+w_n,\qquad
y_n=H_nc_n+\varepsilon_n ,
\end{equation}
with \(F_n\in\mathbb R^{M\times M}\), \(g_n\in\mathbb R^M\) for
\(0\le n<N\), and \(H_n\in\mathbb R^{N_o\times M}\) for
\(0\le n\le N\). The matrices are deterministic after conditioning on the
feature draw and the observation design; randomness enters through the prior,
process increments, and observation errors.

After this conditioning, the PDE and feature construction have supplied
deterministic matrices. The remaining inputs needed for conjugacy are Gaussian
prior/noise laws, independence, and \(R_n\succ0\); these are probabilistic
inputs, separate from the approximation assumptions used later.

\begin{assumption}[Conjugate Gaussian inputs]\label{ass:lg-rfm}
In \eqref{eq:coefficient-state-space},
\[
c_0\sim\mathcal N(m_0,P_0),\qquad
w_n\sim\mathcal N(0,Q_n),\qquad
\varepsilon_n\sim\mathcal N(0,R_n),
\]
for \(w_n\) with \(0\le n<N\) and \(\varepsilon_n\) with \(0\le n\le N\),
where \(m_0\in\mathbb R^M\), \(P_0,Q_n\in\mathbb R^{M\times M}\),
\(P_0\succeq0\), \(Q_n\succeq0\), \(R_n\in\mathbb R^{N_o\times N_o}\), and
\(R_n\succ0\). The variables \(c_0\), \(\{w_n\}_{n=0}^{N-1}\), and
\(\{\varepsilon_n\}_{n=0}^N\) are mutually independent.
\end{assumption}

Affine Gaussian propagation and Gaussian conditioning then give the exact
coefficient posterior.

\begin{maintheorem}[Exact Kalman recursion for linear RFM assimilation]
\label{thm:linear-gaussian-rfm}
For the coefficient model \eqref{eq:coefficient-state-space} under
Assumption~\ref{ass:lg-rfm}, every filtering distribution
\(c_n\mid y_{0:n}\) is Gaussian for \(0\le n\le N\). Set
\(m_{0|-1}=m_0\) and \(P_{0|-1}=P_0\). For \(n=0\), this is the
prior forecast; for \(1\le n\le N\), suppose
\(c_n\mid y_{0:n-1}\sim
\mathcal N(m_{n|n-1},P_{n|n-1})\). The analysis update is
\begin{equation}\label{eq:kalman-analysis}
\begin{aligned}
S_n&=H_nP_{n|n-1}H_n^\top+R_n,
\qquad
K_n=P_{n|n-1}H_n^\top S_n^{-1},\\
m_n&=m_{n|n-1}+K_n\bigl(y_n-H_nm_{n|n-1}\bigr),\\
P_n&=(I-K_nH_n)P_{n|n-1}(I-K_nH_n)^\top
+K_nR_nK_n^\top,
\end{aligned}
\end{equation}
for \(0\le n\le N\), where the Joseph form
\cite{anderson1979optimal} preserves symmetry and positive
semi-definiteness under finite-precision arithmetic. For
\(0\le n\le N-1\), the forecast is
\begin{equation}\label{eq:kalman-forecast}
m_{n+1|n}=F_nm_n+g_n,
\qquad
P_{n+1|n}=F_nP_nF_n^\top+Q_n .
\end{equation}
Thus, within the fixed coefficient model, the posterior update is
closed form; the remaining approximations are the RFM basis, time
discretization, effective-rank reduction, and noise model.
\end{maintheorem}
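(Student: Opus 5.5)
The proof is an induction on \(n\) that alternates two standard facts about Gaussian vectors. The first is that affine images and independent sums of jointly Gaussian vectors are Gaussian. The second is the conditioning formula for a jointly Gaussian pair. The inductive claim is that, for each \(0\le n\le N\), \(c_n\mid y_{0:n-1}\sim\mathcal N(m_{n|n-1},P_{n|n-1})\), where for \(n=0\) the conditioning on \(y_{0:-1}\) is empty. Equivalently, the joint law of \((c_n,y_{0:n-1})\) is Gaussian, with conditional mean and covariance given by the recursion. The base case \(n=0\) is Assumption~\ref{ass:lg-rfm} together with \(m_{0|-1}=m_0\), \(P_{0|-1}=P_0\).

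\emph{Analysis step.} Assume the forecast law at time \(n\). Since \(\varepsilon_n\) is independent of \(c_0,w_{0:n-1},\varepsilon_{0:n-1}\), it is independent of \((c_n,y_{0:n-1})\). Hence, conditional on \(y_{0:n-1}\), the pair \((c_n,y_n)\) is jointly Gaussian with mean \((m_{n|n-1},H_nm_{n|n-1})\). Its covariance blocks are \(P_{n|n-1}\), the cross term \(P_{n|n-1}H_n^\top\), and \(S_n=H_nP_{n|n-1}H_n^\top+R_n\). Because \(R_n\succ0\) and \(P_{n|n-1}\succeq0\), the matrix \(S_n\) is positive definite and therefore invertible, even when \(P_{n|n-1}\) is singular. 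The Gaussian conditioning lemma then gives a Gaussian law for \(c_n\mid y_{0:n}\). Its mean is \(m_{n|n-1}+K_n(y_n-H_nm_{n|n-1})\) and its covariance is \(P_{n|n-1}-K_nS_nK_n^\top=(I-K_nH_n)P_{n|n-1}\).

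\emph{Joseph form.} It remains to show this equals the Joseph form in \eqref{eq:kalman-analysis}. Expand \((I-KH)P(I-KH)^\top+KRK^\top\) to get \((I-KH)P-PH^\top K^\top+K(HPH^\top+R)K^\top\). Then substitute \(KS=PH^\top\): the last term becomes \(PH^\top K^\top\) and cancels the middle one. The Joseph form is a sum of congruences of positive semidefinite matrices, so it is manifestly symmetric and positive semidefinite, which justifies the remark in the statement.

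\emph{Forecast step.} For \(n\le N-1\), write \(c_{n+1}=F_nc_n+g_n+w_n\). Here \(w_n\) is independent of \((c_n,y_{0:n})\), since \(c_n\) depends only on \(c_0,w_{0:n-1}\) and \(y_{0:n}\) also involves \(\varepsilon_{0:n}\). Conditioning on \(y_{0:n}\), an affine map of a Gaussian plus an independent Gaussian is Gaussian, with mean \(F_nm_n+g_n\) and covariance \(F_nP_nF_n^\top+Q_n\). This closes the induction. Joint Gaussianity of everything is automatic, because all variables are affine functions of the independent Gaussian family \((c_0,w,\varepsilon)\).

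The only point needing care is making this independence bookkeeping precise and handling a possibly degenerate \(P_{n|n-1}\). I would state the conditioning lemma in the form that requires only invertibility of the observation-block covariance \(S_n\), which is guaranteed by \(R_n\succ0\). Everything else is routine algebra.
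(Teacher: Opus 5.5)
Your proposal is correct and follows the same route as the paper's proof: induction starting from the prior, Gaussian conditioning for the analysis step, and affine propagation for the forecast step. You also supply details the paper leaves to the classical reference. These are the independence bookkeeping, the invertibility of \(S_n\) from \(R_n\succ0\) when \(P_{n|n-1}\) is singular, and the Joseph-form identity obtained from \(K_nS_n=P_{n|n-1}H_n^\top\).
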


\begin{proof}
This is the classical linear-Gaussian Kalman recursion
\cite{anderson1979optimal}. The prior gives the first forecast,
Gaussian conditioning gives the analysis update, and the affine
transition \(c_{n+1}=F_nc_n+g_n+w_n\) gives the next forecast.
Induction over \(n\) completes the recursion.
\end{proof}

In Section~\ref{sec:theory}, we write \(m_n^f=m_{n|n-1}\) and
\(P_n^f=P_{n|n-1}\) for forecast quantities, and
\(m_n^a=m_n\), \(P_n^a=P_n\) for analysis quantities.
Theorem~\ref{thm:linear-gaussian-rfm} is the coefficient-space reduction
used below. Section~\ref{sec:theory} specializes it to deterministic
effective-rank heat dynamics, constructs a stable mass-whitened coordinate,
controls empirical Gram matrices, and bounds posterior contraction in
\(M\), \(\Delta t\), \(N_o\), and \(\sigma\).

\section{Posterior Contraction and Consistency}
\label{sec:theory}

Theorem~\ref{thm:linear-gaussian-rfm} gives a closed-form Kalman posterior
once the PDE has been restricted to a finite random-feature state space. The
remaining question is quantitative: for the linear heat equation, how do
the posterior covariance and the reconstructed \(L^2\) field error scale
with the effective rank, the observation count, and the time
step? Theorem~\ref{thm:linear-heat-retained-contraction} answers this
question on a joint high-probability event. Its bound separates three
terms: effective-rank feature approximation, deterministic time
consistency, and Bayesian estimation.

The contraction theorem is stated for homogeneous heat, \(f=0\), so that
the analysis can focus on posterior contraction and deterministic
feature/time consistency. A known forcing term adds the corresponding
deterministic quadrature and time-integration consistency contribution to
\(g_n\).

The proof is organized as a set of checkable certificates rather than as a
new modeling layer. Section~\ref{ssec:retained-state-space} constructs the
mass-whitened effective-rank coefficient space in which the Kalman posterior
is analyzed. Section~\ref{ssec:concentration-events} records three
certificates, approximation, leverage, and empirical-Gram concentration, that
connect the random feature draw and random sensors to deterministic bounds.
Section~\ref{ssec:contraction-theorem} adds the deterministic heat-consistency
certificate and combines it with the Kalman covariance identity. Finally,
Section~\ref{ssec:feature-corollaries} identifies feature-family inputs for
Gaussian random Fourier, ELU, and Dirichlet-bubble features. Conditional
extensions to Caputo subdiffusion and linearized Darcy inversion are given in
the Supplementary Material; they require the same effective-state and Gram
arguments together with problem-specific stability or linearization
assumptions.

\subsection{Mass-whitened effective-rank coordinates}
\label{ssec:retained-state-space}

Let \(\Phi=(\varphi_1,\ldots,\varphi_M)^\top\) denote the raw feature
vector. For non-orthogonal random features, the Galerkin mass matrix may
have very small tail eigenvalues, and estimates stated in the raw
coefficient vector \(c\in\mathbb R^M\) inherit this conditioning. The
following construction replaces the raw coefficients by a mass-whitened
effective coordinate in which the mass matrix is the identity. We call \(r\)
the effective rank and \(z\in\mathbb R^r\) the mass-whitened effective
coordinate. The posterior dimension is \(r\), not the raw feature count \(M\).
Only the nondegeneracy condition \(r\ge1\) is assumed; the rest is the
deterministic coordinate construction used by the theorem.

\begin{construction}[Mass-whitened effective-rank coordinates]
\label{ass:mass-whitened-rfm}
Let \(\{\varphi_j\}_{j=1}^M\) be a fixed raw random-feature draw on
\(\Omega\), and let
\[
d\mu_\Omega(x)=|\Omega|^{-1}\,dx
\]
be normalized volume measure. All \(L^2\) inner products in this section
are taken with respect to \(\mu_\Omega\); when \(|\Omega|=1\), this is the
usual Lebesgue \(L^2\) inner product. Thus
\(\|v\|_{L^2(\Omega)}^2=\int_\Omega |v(x)|^2\,d\mu_\Omega(x)\) throughout
this section. For the heat-equation results, fix the diffusivity \(\nu>0\)
and define
\[
\mathbb M_{ij}=\int_\Omega \varphi_i(x)\varphi_j(x)\,d\mu_\Omega(x),
\qquad
\mathbb K_{ij}=\nu\int_\Omega
\nabla\varphi_i(x)\cdot\nabla\varphi_j(x)\,d\mu_\Omega(x).
\]
Assume \(\mathbb M\succeq0\) and let
\(\mathbb M=V\Lambda V^\top\), where
\(\Lambda=\operatorname{diag}(\lambda_1,\ldots,\lambda_M)\) with
\(\lambda_1\ge\cdots\ge\lambda_M\). For a tolerance \(\tau\ge0\), define
\[
I_\tau=\{j:\lambda_j>\tau\lambda_1\},\qquad r=|I_\tau|,
\]
and assume \(r\ge1\). Let \(V_\tau\) contain the retained eigenvectors and
\(\Lambda_\tau\) the retained eigenvalues. The coefficient transform is
\[
T_\tau=V_\tau\Lambda_\tau^{-1/2}\in\mathbb R^{M\times r}.
\]
The effective feature vector is
\[
\Psi(x)=(\psi_1(x),\ldots,\psi_r(x))^\top=T_\tau^\top\Phi(x).
\]
All coefficient-space dynamics and observations are then written in the
mass-whitened effective coordinate \(z\in\mathbb R^r\), with transformed matrices
\[
\widetilde{\mathbb M}=T_\tau^\top\mathbb M T_\tau,\qquad
\widetilde{\mathbb K}=T_\tau^\top\mathbb K T_\tau,\qquad
\widetilde H_n=H_nT_\tau.
\]
Below, \(H_n\) denotes the effective-rank observation matrix unless the raw
coefficient matrix is explicitly mentioned. The discarded eigendirections
enter through the deterministic approximation term in the contraction theorem.
\end{construction}

\begin{remark}[Continuum and quadrature whitening]
\label{rem:quadrature-whitening}
Construction~\ref{ass:mass-whitened-rfm} is a continuum statement. If the
features are whitened by a fixed quadrature rule, the same argument applies
with an additional deterministic consistency term measuring the departure from
continuum orthonormality, for example
\[
\eta_{\rm quad}
=\left\|\int_\Omega\Psi(x)\Psi(x)^\top\,d\mu_\Omega(x)-I_r\right\|_2 .
\]
The main theorem is stated for \(\eta_{\rm quad}=0\).
\end{remark}

\begin{lemma}[Mass identity after whitening]\label{lem:mass-whitening}
Under Construction~\ref{ass:mass-whitened-rfm},
\[
\widetilde{\mathbb M}=I_r.
\]
Consequently, the implicit-Euler heat propagator in the effective coordinates
is
\[
F=(I_r+\Delta t\,\widetilde{\mathbb K})^{-1}.
\]
\end{lemma}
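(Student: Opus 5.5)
The plan is a direct computation using the spectral decomposition fixed in Construction~\ref{ass:mass-whitened-rfm}, followed by a change of variables in the implicit-Euler step.

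\emph{The identity \(\widetilde{\mathbb M}=I_r\).}
Since \(\mathbb M\) is symmetric, write \(\mathbb M=V\Lambda V^\top\) with \(V\) orthogonal. Then \(V_\tau\in\mathbb R^{M\times r}\) is the submatrix of \(V\) formed by the columns indexed by \(I_\tau\), so \(V^\top V_\tau\) is the \(M\times r\) selection matrix \(E_\tau\) with columns \(e_j\), \(j\in I_\tau\). This gives \(V_\tau^\top\mathbb M V_\tau=E_\tau^\top\Lambda E_\tau=\Lambda_\tau\).

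Every retained eigenvalue satisfies \(\lambda_j>\tau\lambda_1\ge0\). Hence \(\Lambda_\tau\) is strictly positive and \(\Lambda_\tau^{-1/2}\) is well defined. This is the one point that needs care: if \(\tau=0\) and \(\lambda_1=0\), then \(I_\tau\) is empty, which is exactly what the assumption \(r\ge1\) rules out. It then follows that
\[
\widetilde{\mathbb M}=\Lambda_\tau^{-1/2}\Lambda_\tau\Lambda_\tau^{-1/2}=I_r .
\]
Equivalently, the effective features \(\psi_k\) are orthonormal in \(L^2(\mu_\Omega)\), since \(\int_\Omega\Psi\Psi^\top d\mu_\Omega=T_\tau^\top\mathbb M T_\tau\).

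\emph{The propagator.}
Restrict the Galerkin ansatz to \(u=\Psi^\top z=\Phi^\top T_\tau z\). Testing against \(\Psi\) gives the semi-discrete system \(\widetilde{\mathbb M}\dot z=-\widetilde{\mathbb K}z\). Implicit Euler then reads
\[
(\widetilde{\mathbb M}+\Delta t\,\widetilde{\mathbb K})z_{n+1}=\widetilde{\mathbb M}z_n ,
\]
which becomes \((I_r+\Delta t\widetilde{\mathbb K})z_{n+1}=z_n\) by the first part.

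For invertibility, note that \(\widetilde{\mathbb K}=T_\tau^\top\mathbb K T_\tau\) is symmetric and positive semidefinite, because \(\mathbb K\) is a Gram matrix of gradients with \(\nu>0\). So \(I_r+\Delta t\widetilde{\mathbb K}\succeq I_r\) is invertible for every \(\Delta t\ge0\), and
\[
F=(I_r+\Delta t\,\widetilde{\mathbb K})^{-1}.
\]

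No real obstacle arises. The steps needing care are the positivity of the retained eigenvalues (via \(r\ge1\)) and stating that the effective-coordinate Galerkin system is the transformed one, \(T_\tau^\top(\cdot)T_\tau\), rather than a reduction of the raw \(F\).
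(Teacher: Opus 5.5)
Your proposal is correct and follows essentially the same route as the paper: compute \(T_\tau^\top\mathbb M T_\tau\) directly from the orthonormality of the eigenvectors, then substitute \(\widetilde{\mathbb M}=I_r\) into the mass-form implicit-Euler step \((\widetilde{\mathbb M}+\Delta t\,\widetilde{\mathbb K})z_{n+1}=\widetilde{\mathbb M}z_n\). Your selection-matrix step \(V^\top V_\tau=E_\tau\) is slightly cleaner than the paper's display, which tacitly writes \(\mathbb M\) as its retained part \(V_\tau\Lambda_\tau V_\tau^\top\). Your checks that the retained eigenvalues are strictly positive and that \(I_r+\Delta t\,\widetilde{\mathbb K}\succeq I_r\) is invertible fill in details the paper leaves implicit.
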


\begin{proof}
By construction,
\[
T_\tau^\top\mathbb M T_\tau
=\Lambda_\tau^{-1/2}V_\tau^\top
V_\tau\Lambda_\tau V_\tau^\top
V_\tau\Lambda_\tau^{-1/2}=I_r,
\]
using the orthonormality of the retained eigenvectors. Substituting
\(\widetilde{\mathbb M}=I_r\) into the mass-form implicit-Euler update
\((\widetilde{\mathbb M}+\Delta t\,\widetilde{\mathbb K})z_{n+1}
=\widetilde{\mathbb M}z_n\) gives the stated propagator.
\end{proof}

Thus Theorem~\ref{thm:linear-gaussian-rfm} applies with \(M\) replaced by
\(r\). Inside the effective coefficient space, the Kalman recursion is closed form, and for
implicit-Euler heat the propagator in Lemma~\ref{lem:mass-whitening} is
contractive whenever \(\widetilde{\mathbb K}\succeq0\). The remaining task is
to quantify how the coefficient-space posterior covariance and field reconstruction
error depend on \(r\), \(N_o\), and \(\Delta t\).

\subsection{Approximation and observation certificates}
\label{ssec:concentration-events}

The contraction theorem is conditioned on three proof certificates. They are
not additional dynamics assumptions: they record where approximation, feature
geometry, and sensor randomness enter the estimate. The approximation
certificate places the exact PDE trajectory near the effective-rank feature
space. The leverage certificate bounds the pointwise size of the effective
basis. The Gram certificate controls the empirical observation operator
generated by random sensor locations. We keep them separate because they
involve different sources of randomness: the frozen feature draw and the
observation locations.

\subsubsection{Approximation}
It places the exact heat trajectory near the frozen effective-rank space and
provides the deterministic bridge to an effective coefficient reference.

\begin{assumption}[Effective-rank approximation class]
\label{ass:retained-approximation}
Fix a final time \(T=N\Delta t\), a raw feature family, and a
mass-retention tolerance \(\tau\). For the exact PDE states
\(u^\star(\cdot,t_n)\), \(0\le n\le N\), assume that with probability at
least \(1-\delta_{\rm app}\) over the frozen feature draw there is a
rank-truncated mass-whitened feature space satisfying
Construction~\ref{ass:mass-whitened-rfm} and coefficients
\(z_n^\dagger\in\mathbb R^r\) such that
\[
\sup_{0\le n\le N}
\left\|u^\star(\cdot,t_n)-\Psi^\top z_n^\dagger\right\|_{L^2(\Omega)}
\le \varepsilon_{M,\tau}(u^\star).
\]
\end{assumption}

For Gaussian random Fourier features this event follows from standard
Monte Carlo approximation in the associated kernel/Barron-type integral
class \cite{rahimi2008random,bach2017equivalence}, up to the discarded
mass-eigendirection tail. For ELU random features it is the analogous
single-hidden-layer RFM approximation hypothesis
\cite{chen2022bridging,chen2023timedependent,ming2025spectral}. The
feature-family statements at the end of this section provide the
corresponding constants and failure probabilities.

\subsubsection{Leverage}
The leverage certificate bounds the pointwise effective-rank leverage. This is
the only feature-dependent quantity entering the empirical Gram concentration
estimate.

\begin{assumption}[Effective-rank leverage event]
\label{ass:retained-leverage}
For a mass-whitened effective-rank feature draw define
\[
L_\tau=\operatorname*{ess\,sup}_{x\in\Omega}\|\Psi(x)\|_2^2,
\]
where the essential supremum is with respect to \(\mu_\Omega\). Since
\[
\mathbb E_{X\sim\mu_\Omega}\|\Psi(X)\|_2^2
=\operatorname{tr}\int_\Omega\Psi(x)\Psi(x)^\top\,d\mu_\Omega(x)=r,
\]
one always has \(L_\tau\ge r\), and any valid certificate satisfies
\(L_\star\ge r\). For a deterministic level \(L_\star\ge r\), let
\(\mathcal E_{\rm lev}(L_\star)=\{L_\tau\le L_\star\}\) denote the leverage
event. The contraction theorem is stated on
\(\mathcal E_{\rm lev}(L_\star)\).
\end{assumption}

The theorem only needs the event \(L_\tau\le L_\star\). For readers who want a
checkable finite certificate, the grid-certificate lemma in the Supplementary
Material shows how to certify \(L_\star\) from a
finite cover of \(\Omega\), raw feature envelopes, and gradient envelopes. In
computations this cover can be chosen from the quadrature grid used to assemble
\(\mathbb M\); the feature-family envelopes are summarized in
Section~\ref{ssec:feature-corollaries}.

\subsubsection{Empirical Gram}
The Gram certificate is the matrix Chernoff concentration
\cite{tropp2012user} of
\(N_o^{-1}H^\top H\) around the continuum mass-whitened identity. The
sample size depends on the effective-rank leverage envelope \(L_\star\).

\begin{lemma}[Conditional empirical Gram concentration]
\label{lem:retained-gram-concentration}
Let a feature draw satisfy Construction~\ref{ass:mass-whitened-rfm} and the
leverage event \(\mathcal E_{\rm lev}(L_\star)\) in
Assumption~\ref{ass:retained-leverage}. Draw observation locations
\(X_1,\ldots,X_{N_o}\) independently from \(\mu_\Omega\), independently of the
feature draw, and define \(H_{ia}=\psi_a(X_i)\). Then, for \(0<\eta<1\),
\[
\Pr\left[
\lambda_{\min}\left(N_o^{-1}H^\top H\right)\le 1-\eta
\;\middle|\;\Psi
\right]
\le
r\left(\frac{e^{-\eta}}{(1-\eta)^{1-\eta}}\right)^{N_o/L_\star},
\]
and
\[
\Pr\left[
\lambda_{\max}\left(N_o^{-1}H^\top H\right)\ge 1+\eta
\;\middle|\;\Psi
\right]
\le
r\left(\frac{e^\eta}{(1+\eta)^{1+\eta}}\right)^{N_o/L_\star}.
\]
Consequently, the two displayed Chernoff factors simplify to a bound of the
form \(r\exp(-cN_o\eta^2/L_\star)\) with a universal \(c>0\), so that
there is a universal constant \(C\) such that the deviation event
\[
\left\|N_o^{-1}H^\top H-I_r\right\|_2\le\eta
\]
holds with conditional probability at least \(1-\delta_{\rm gram}\)
whenever
\[
N_o\ge C L_\star\eta^{-2}\log\frac{2r}{\delta_{\rm gram}} .
\]
\end{lemma}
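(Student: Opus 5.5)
We write the normalized Gram matrix as a sum of independent random positive semidefinite matrices and apply the matrix Chernoff inequality \cite{tropp2012user} conditionally on the frozen draw \(\Psi\). Condition on \(\Psi\) satisfying Construction~\ref{ass:mass-whitened-rfm} and the leverage event \(\mathcal E_{\rm lev}(L_\star)\). Since the sensors are independent of the feature draw, the \(X_i\) remain i.i.d.\ from \(\mu_\Omega\) under this conditioning, and
\[
N_o^{-1}H^\top H=\sum_{i=1}^{N_o}Y_i,\qquad Y_i=N_o^{-1}\Psi(X_i)\Psi(X_i)^\top .
\]
The \(Y_i\) are i.i.d.\ symmetric \(r\times r\) matrices with \(Y_i\succeq0\). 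Each is rank one, so
\[
\lambda_{\max}(Y_i)=N_o^{-1}\|\Psi(X_i)\|_2^2\le L_\tau/N_o\le L_\star/N_o
\]
almost surely. The bound uses the essential supremum, so it may fail only on a \(\mu_\Omega\)-null set, which is harmless.

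For the mean, Lemma~\ref{lem:mass-whitening} gives
\[
\mathbb E\sum_iY_i=\int_\Omega\Psi\Psi^\top\,d\mu_\Omega=\widetilde{\mathbb M}=I_r .
\]
Here we use \(\eta_{\rm quad}=0\) as in Remark~\ref{rem:quadrature-whitening}. Hence \(\mu_{\min}=\mu_{\max}=1\).

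Tropp's matrix Chernoff bound, with uniform bound \(R=L_\star/N_o\), then gives
\[
\Pr\bigl[\lambda_{\min}\le(1-\eta)\mu_{\min}\bigr]\le r\Bigl[e^{-\eta}(1-\eta)^{-(1-\eta)}\Bigr]^{\mu_{\min}/R}
\]
and the analogous upper-tail statement. Substituting \(\mu_{\min}=\mu_{\max}=1\) and \(\mu/R=N_o/L_\star\) yields the two displayed inequalities exactly.

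For the simplified form we use the scalar inequalities
\[
e^{-\eta}(1-\eta)^{-(1-\eta)}\le e^{-\eta^2/2},\qquad e^{\eta}(1+\eta)^{-(1+\eta)}\le e^{-\eta^2/3},\qquad 0<\eta<1 .
\]
Each follows by comparing logarithms and their derivatives at \(\eta=0\). Combining the two tails with a union bound gives
\[
\Pr\Bigl[\bigl\|N_o^{-1}H^\top H-I_r\bigr\|_2>\eta\Bigr]\le 2r\exp\bigl(-N_o\eta^2/(3L_\star)\bigr).
\]
This uses the fact that for a symmetric matrix the spectral deviation from \(I_r\) is \(\max(\lambda_{\max}-1,\,1-\lambda_{\min})\). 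Requiring the right side to be at most \(\delta_{\rm gram}\) gives the sample-size condition with \(C=3\).

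The only delicate points are conditioning and constants. The first is verifying that the conditioning on \(\Psi\) leaves the \(X_i\) i.i.d.\ with law \(\mu_\Omega\), which follows from independence. The second is matching Tropp's normalization, in which the exponent is \(\mu/R\) rather than \(\mu\).

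Neither step poses a real obstacle: the lemma is a direct instantiation of matrix Chernoff, and the main check is the scalar logarithmic inequality behind the universal constant \(c\).
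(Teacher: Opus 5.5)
Your proposal is correct and follows the same route as the paper, which states that whitening and the leverage bound are exactly what is needed to apply matrix Chernoff. Conditional on \(\Psi\), the Gram matrix is a sum of i.i.d.\ rank-one PSD matrices with mean \(I_r\) and \(\lambda_{\max}\le L_\star/N_o\), so Tropp's bound with \(\mu_{\min}=\mu_{\max}=1\) and \(R=L_\star/N_o\) gives the displayed tails, and the scalar bounds plus a union bound give the sample-size condition with \(C=3\).

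One step needs tightening. Matching derivatives at \(\eta=0\) does not by itself establish \(\eta-(1+\eta)\log(1+\eta)\le-\eta^2/3\), because the second derivative of the difference changes sign at \(\eta=1/2\). The additional check \(2/3-\log 2<0\) at \(\eta=1\) closes this.
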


This is the observation-information certificate. Mass whitening makes the
continuum covariance of \(\Psi(X)\) equal to \(I_r\); the leverage bound is the
only feature-dependent quantity needed to apply matrix Chernoff.

A union bound over the \(N+1\) analysis times extends this to the
time-uniform event needed for streaming assimilation. The cost is a
logarithmic factor in \(N\); for re-used observation locations the
single-time lemma is already uniform in time.

\begin{lemma}[Time-uniform effective-rank Gram concentration]
\label{lem:time-uniform-retained-gram}
Let a feature draw satisfy Construction~\ref{ass:mass-whitened-rfm} and the
leverage event \(\mathcal E_{\rm lev}(L_\star)\). For each analysis time
\(t_n\), \(0\le n\le N\), draw observation locations
\(X_{n,1},\ldots,X_{n,N_o}\) independently from \(\mu_\Omega\), independently
across \(n\) and independently of the feature draw, and define
\((H_n)_{ia}=\psi_a(X_{n,i})\). Then there is a universal constant
\(C>0\) such that, for \(0<\eta<1\), whenever
\(N_o\ge C L_\star\eta^{-2}\log(2r(N+1)/\delta_{\rm gram})\) the
deviation event
\(\max_{0\le n\le N}\|N_o^{-1}H_n^\top H_n-I_r\|_2\le\eta\) holds
with conditional probability at least \(1-\delta_{\rm gram}\), given
the mass-whitened feature draw. If the same observation locations are
reused at every analysis time, the same conclusion holds with the
logarithmic factor \(\log(2r/\delta_{\rm gram})\), because there is
only one empirical Gram matrix to control.
\end{lemma}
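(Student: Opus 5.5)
The plan is to reduce the time-uniform statement to the single-time Lemma~\ref{lem:retained-gram-concentration} and then take a union bound over the \(N+1\) analysis times, conditionally on the frozen mass-whitened feature draw \(\Psi\).

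\textbf{Step 1: each analysis time satisfies the single-time hypotheses.} Condition on \(\Psi\) satisfying Construction~\ref{ass:mass-whitened-rfm} and \(\mathcal E_{\rm lev}(L_\star)\). For a fixed \(n\), the locations \(X_{n,1},\ldots,X_{n,N_o}\) are i.i.d.\ from \(\mu_\Omega\) and independent of \(\Psi\). Hence the matrix \(N_o^{-1}H_n^\top H_n=N_o^{-1}\sum_i\Psi(X_{n,i})\Psi(X_{n,i})^\top\) satisfies the hypotheses of Lemma~\ref{lem:retained-gram-concentration}. The summands are positive semidefinite, have mean \(N_o^{-1}I_r\) by the whitening identity of Lemma~\ref{lem:mass-whitening}, and have norm at most \(L_\star/N_o\) almost surely on the leverage event.

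\textbf{Step 2: per-time failure bound and union bound.} Combining the two Chernoff tails of that lemma with the stated simplification gives, for each \(n\),
\[
\Pr\Bigl[\bigl\|N_o^{-1}H_n^\top H_n-I_r\bigr\|_2>\eta \,\Big|\, \Psi\Bigr]\le 2r\exp(-cN_o\eta^2/L_\star).
\]
A union bound over \(0\le n\le N\) bounds the failure probability of the maximum by \(2r(N+1)\exp(-cN_o\eta^2/L_\star)\). This needs no independence across times; the independence assumption only makes the conditioning on \(\Psi\) clean. Requiring this bound to be at most \(\delta_{\rm gram}\) is equivalent to
\[
N_o\ge c^{-1}L_\star\eta^{-2}\log\bigl(2r(N+1)/\delta_{\rm gram}\bigr),
\]
so \(C=1/c\) suffices.

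\textbf{Step 3: reused locations.} If the same locations are used at every analysis time, then \(H_n\equiv H_0\). There is a single Gram matrix, so the single-time lemma applies directly with the factor \(\log(2r/\delta_{\rm gram})\).

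\textbf{Main obstacle.} The only nonroutine point is the constant bookkeeping in the reduction of the Chernoff factors to the form \(\exp(-c\eta^2 N_o/L_\star)\). One uses \(-\eta-(1-\eta)\log(1-\eta)\le-\eta^2/2\) and \(\eta-(1+\eta)\log(1+\eta)\le-\eta^2/3\) for \(0<\eta<1\), which gives \(c=1/3\). The two tails together produce the factor \(2r\), and that factor is exactly what appears inside the logarithm. The universal constant must be chosen consistently with the single-time lemma, for instance by taking \(C\) as the larger of the two constants.
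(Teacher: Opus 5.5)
Your proposal is correct and follows the paper's proof. You condition on \(\Psi\), apply the single-time Chernoff lemma at each analysis time so that the per-time failure probability is at most \(\delta_{\rm gram}/(N+1)\), take a union bound over the \(N+1\) times, and observe that reused locations leave a single Gram matrix. Your explicit bookkeeping (\(c=1/3\), hence \(C=3\)) just makes concrete the paper's ``increase \(C\) if necessary.'' You are also right that the union bound needs no independence across times; what is used is the independence of the locations from the feature draw.
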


Thus the single-analysis information certificate becomes a streaming
certificate by a union bound over analysis times. Appendix~\ref{app:aux-proofs}
gives the time-uniform argument; the Supplementary Material gives the
single-time Chernoff proof and the grid-leverage certificate.

\subsection{Risk decomposition theorem}
\label{ssec:contraction-theorem}

We next relate the effective-rank Kalman model to the exact heat trajectory.
Lemma~\ref{lem:deterministic-retained-dynamics-bias} accumulates the named
dynamics input \(\mathcal E_{\rm dyn}(C_b,p,S_T)\), and
Lemmas~\ref{lem:implicit-euler-heat-consistency}--\ref{lem:ritz-heat-consistency}
verify that input for heat. The stochastic part is separate: the empirical
Gram event controls the Kalman covariance, and
Lemma~\ref{lem:deterministic-truth-kalman-mean-error} converts that covariance
into a mean-square bound for data from a deterministic effective-rank
reference trajectory.

\subsubsection{Dynamics Bias}
The first ingredient in the risk decomposition is deterministic. It measures
how well the forecast matrices propagate the chosen effective-rank reference
trajectory.

\begin{assumption}[Effective-rank dynamics consistency input]
\label{ass:retained-dynamics-consistency}
Fix \(C_b>0\), \(p>0\), and \(S_T\ge1\). Let
\(F_n\in\mathbb R^{r\times r}\), \(0\le n<N\), be the effective-rank forecast
matrices used by the Kalman model, and let
\(\{z_n^\dagger\}_{n=0}^N\subset\mathbb R^r\) be the reference
coefficient trajectory. Define
\[
b_{n+1}=z_{n+1}^\dagger-F_nz_n^\dagger,\qquad 0\le n<N,
\]
and, for \(0\le m\le n\le N\),
\[
\mathcal F_{n:m}=
\begin{cases}
F_{n-1}F_{n-2}\cdots F_m, & m<n,\\
I_r, & m=n.
\end{cases}
\]
The input \(\mathcal E_{\rm dyn}(C_b,p,S_T)\) holds if
\[
\|\mathcal F_{n:m}\|_2\le S_T,\qquad 0\le m\le n\le N,
\]
and
\[
\|b_{n+1}\|_2
\le
C_b\Delta t\left(\varepsilon_{M,\tau}(u^\star)+(\Delta t)^p\right),
\qquad 0\le n<N .
\]
\end{assumption}

\begin{lemma}[Deterministic effective-rank dynamics bias]
\label{lem:deterministic-retained-dynamics-bias}
Let Construction~\ref{ass:mass-whitened-rfm} and the dynamics input
\(\mathcal E_{\rm dyn}(C_b,p,S_T)\) of
Assumption~\ref{ass:retained-dynamics-consistency} hold.
Let the model-only coefficient trajectory satisfy
\[
\bar z_0=z_0^\dagger,\qquad \bar z_{n+1}=F_n\bar z_n.
\]
Then, for every \(0\le n\le N\),
\[
\left\|z_n^\dagger-\bar z_n\right\|_2
\le
S_T\sum_{k=0}^{n-1}\|b_{k+1}\|_2.
\]
Consequently, by the whitening identity,
\[
\left\|\Psi^\top(z_n^\dagger-\bar z_n)\right\|_{L^2(\Omega)}
\le
S_T\sum_{k=0}^{n-1}\|b_{k+1}\|_2.
\]
In particular,
\[
\sup_{0\le n\le N}
\left\|\Psi^\top(z_n^\dagger-\bar z_n)\right\|_{L^2(\Omega)}
\le
S_T C_bT
\left(\varepsilon_{M,\tau}(u^\star)+(\Delta t)^p\right).
\]
\end{lemma}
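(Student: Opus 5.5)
The proof is a discrete Duhamel (variation-of-constants) argument for the error \(e_n=z_n^\dagger-\bar z_n\). We then convert it to an \(L^2\) bound using the isometry supplied by Lemma~\ref{lem:mass-whitening}.

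First, subtract the model recursion \(\bar z_{n+1}=F_n\bar z_n\) from the definition \(z_{n+1}^\dagger=F_nz_n^\dagger+b_{n+1}\) in Assumption~\ref{ass:retained-dynamics-consistency}. This gives \(e_{n+1}=F_ne_n+b_{n+1}\) with \(e_0=0\). Unrolling by induction on \(n\), using \(\mathcal F_{n:k+1}=F_{n-1}\cdots F_{k+1}\) and \(\mathcal F_{n:n}=I_r\), yields
\[
e_n=\sum_{k=0}^{n-1}\mathcal F_{n:k+1}\,b_{k+1}.
\]
The inductive step is \(F_n\mathcal F_{n:k+1}=\mathcal F_{n+1:k+1}\), with the new term \(b_{n+1}=\mathcal F_{n+1:n+1}b_{n+1}\). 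Applying the triangle inequality and the propagator bound \(\|\mathcal F_{n:m}\|_2\le S_T\) for \(0\le m\le n\le N\) gives the first claimed estimate \(\|e_n\|_2\le S_T\sum_{k<n}\|b_{k+1}\|_2\).

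Second, for the \(L^2\) statement, note that for any \(v\in\mathbb R^r\)
\[
\|\Psi^\top v\|_{L^2(\Omega)}^2=v^\top\Big(\int_\Omega\Psi\Psi^\top\,d\mu_\Omega\Big)v=v^\top T_\tau^\top\mathbb M T_\tau v=\|v\|_2^2,
\]
by Lemma~\ref{lem:mass-whitening} (\(\widetilde{\mathbb M}=I_r\)). This is where \(\eta_{\rm quad}=0\) in Remark~\ref{rem:quadrature-whitening} is used. Setting \(v=e_n\) transfers the Euclidean bound verbatim to the field bound.

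Finally, insert the bias bound \(\|b_{k+1}\|_2\le C_b\Delta t(\varepsilon_{M,\tau}(u^\star)+(\Delta t)^p)\) from \(\mathcal E_{\rm dyn}\). The sum has \(n\le N\) identical terms, so it is at most \(N\Delta t\,C_b(\cdots)=C_bT(\cdots)\). Taking the supremum over \(0\le n\le N\) gives the final display.

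There is no genuine obstacle. The only care needed is the indexing of \(\mathcal F_{n:m}\), so that the empty product at \(k=n-1\) is \(I_r\) and the stability hypothesis covers every factor appearing. The case \(n=0\) holds trivially, since both sides vanish.
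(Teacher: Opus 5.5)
Your proposal is correct and follows essentially the same route as the paper. The paper likewise writes \(e_{n+1}=F_ne_n+b_{n+1}\) with \(e_0=0\), unrolls it to \(e_n=\sum_{k=0}^{n-1}\mathcal F_{n:k+1}b_{k+1}\), applies the stability bound with the triangle inequality, transfers to \(L^2\) via the whitening isometry \(\|\Psi^\top a\|_{L^2(\Omega)}=\|a\|_2\), and sums the residual bounds using \(n\Delta t\le T\).
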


\begin{proof}
Set \(e_n=z_n^\dagger-\bar z_n\). Since \(e_0=0\),
\[
e_{n+1}
=z_{n+1}^\dagger-F_n\bar z_n
=F_ne_n+b_{n+1}.
\]
Iterating this recursion gives
\[
e_n=\sum_{k=0}^{n-1}\mathcal F_{n:k+1}b_{k+1},
\]
with the convention \(\mathcal F_{n:n}=I_r\) for the last term. The stability
bound and the triangle inequality imply the coefficient estimate. The
\(L^2\)-field estimate follows from Lemma~\ref{lem:mass-whitening}, which
gives
\(\|\Psi^\top a\|_{L^2(\Omega)}=\|a\|_2\) for effective coordinates. The
displayed consistency consequence follows from
\(\mathcal E_{\rm dyn}(C_b,p,S_T)\) by summing the residual bound and using
\(n\Delta t\le T\).
\end{proof}

\subsubsection{Heat Consistency}
The dynamics input above is abstract on purpose: it separates the main Kalman
argument from the PDE-specific consistency estimate. For heat, there are two
routes. The \(L^2\)-projection route uses a graph-domain residual and is
appropriate when the generator action on the projected trajectory is
well-defined. The conforming route uses the Galerkin heat solution and a Ritz
approximation quantity; this is the route used later for Dirichlet-bubble
features.

For an \(L^2\)-projection reference,
\(\mathcal E_{\rm dyn}(C_b,p,S_T)\) can be verified by controlling the heat
generator in the graph norm. For a general nonconforming random-feature
space, the operator \(\mathcal L\) need not act on \(P_\tau u^\star\) as an
\(L^2\) function. The following certificate is therefore conditional on the
displayed graph-domain residual being well defined and controlled. This
condition is automatic for commuting spectral truncations and is replaced by
the conforming Ritz argument of Lemma~\ref{lem:ritz-heat-consistency} for
boundary-compatible Galerkin spaces.

\begin{lemma}[Graph-residual consistency certificate for \(L^2\)-projection references]
\label{lem:implicit-euler-heat-consistency}
Let Construction~\ref{ass:mass-whitened-rfm} hold and let
\(F=(I_r+\Delta t\,\widetilde{\mathbb K})^{-1}\) be the implicit-Euler
effective-rank heat propagator with \(\widetilde{\mathbb K}\succeq0\). Let
\(u^\star\) be a strong solution of the linear heat equation on
\(\Omega\) satisfying \(\partial_t u^\star=\mathcal Lu^\star\), and
choose reference coefficients
\[
z_n^\dagger=\arg\min_{z\in\mathbb R^r}
\|u^\star(\cdot,t_n)-\Psi^\top z\|_{L^2(\Omega)},
\qquad 0\le n\le N,
\]
i.e. the orthogonal projection onto the effective-rank span; denote this
projector by \(P_\tau\). Assume the
approximation event in Assumption~\ref{ass:retained-approximation}
holds with constant \(\varepsilon_{M,\tau}(u^\star)\). Because the heat
generator is unbounded, assume in addition that the displayed
graph residual is well defined in \(L^2(\Omega)\) and satisfies the
graph-domain generator-consistency condition
\[
\sup_{0\le n\le N}
\left\|P_\tau\mathcal L
\left(P_\tau u^\star(\cdot,t_n)-u^\star(\cdot,t_n)\right)
\right\|_{L^2(\Omega)}
\le C_{\mathcal L}\varepsilon_{M,\tau}(u^\star).
\]
For commuting spectral truncations this condition is automatic; for
non-orthogonal random features it is a graph-domain input that
supplements the \(L^2\) approximation event. Finally, assume
\(t\mapsto u^\star(\cdot,t)\in C^2([0,T];L^2(\Omega))\) and define
\[
M_2=\sup_{0\le t\le T}
\|\partial_{tt}u^\star(\cdot,t)\|_{L^2(\Omega)}<\infty.
\]
Then the residual
\(b_{n+1}=z_{n+1}^\dagger-Fz_n^\dagger\) defined in
Assumption~\ref{ass:retained-dynamics-consistency} satisfies the
two-term estimate
\[
\|b_{n+1}\|_2
\le
\Delta t\,C_{\mathcal L}\,\varepsilon_{M,\tau}(u^\star)
+\tfrac12 M_2(\Delta t)^2 ,
\qquad 0\le n<N,
\]
where \(C_{\mathcal L}\) is the constant in the graph-domain
generator-consistency condition. Hence
\(\mathcal E_{\rm dyn}(C_{\mathcal L}+\tfrac12M_2,1,1)\) holds, and
Lemma~\ref{lem:deterministic-retained-dynamics-bias} gives the sharper
accumulated dynamics-bias bound
\[
\sup_{0\le n\le N}
\left\|\Psi^\top(z_n^\dagger-\bar z_n)\right\|_{L^2(\Omega)}
\le
T\,C_{\mathcal L}\,\varepsilon_{M,\tau}(u^\star)
+\tfrac12 M_2 T\,\Delta t ,
\]
with \(S_T=1\) coming from \(\|F\|_2\le 1\). The bound is first order in
\(\Delta t\) and linear in the effective-rank approximation error.
\end{lemma}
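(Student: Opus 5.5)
The plan is to rewrite the residual through the implicit-Euler resolvent, so that the propagator contracts and what remains is a local truncation error in the coordinates \(z_n^\dagger\). By Lemma~\ref{lem:mass-whitening}, \(\widetilde{\mathbb M}=I_r\), so \(\Psi\) is \(L^2(\mu_\Omega)\)-orthonormal. The projection coefficients are therefore \(z_n^\dagger=\int_\Omega\Psi\,u^\star(\cdot,t_n)\,d\mu_\Omega\). The coefficient map \(v\mapsto\int_\Omega\Psi v\,d\mu_\Omega\) has operator norm at most \(1\) from \(L^2(\Omega)\) to \(\mathbb R^r\) by Bessel's inequality, and it is an isometry on the span. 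Since \(\widetilde{\mathbb K}\succeq0\) is symmetric, \(F=(I_r+\Delta t\,\widetilde{\mathbb K})^{-1}\) has \(\|F\|_2\le1\). Writing
\[
b_{n+1}=F\bigl[(I_r+\Delta t\,\widetilde{\mathbb K})z_{n+1}^\dagger-z_n^\dagger\bigr],
\]
it then suffices to bound \(\|z_{n+1}^\dagger-z_n^\dagger+\Delta t\,\widetilde{\mathbb K}z_{n+1}^\dagger\|_2\).

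Next I split this vector into a time part and a space part. For the time part, Taylor's formula with integral remainder in \(C^2([0,T];L^2)\), expanded backward from \(t_{n+1}\), gives
\[
u^\star(t_{n+1})-u^\star(t_n)=\Delta t\,\partial_tu^\star(t_{n+1})-\rho_n,\qquad \|\rho_n\|_{L^2}\le\tfrac12M_2(\Delta t)^2 .
\]
Applying the coefficient map, which is nonexpansive, yields
\[
z_{n+1}^\dagger-z_n^\dagger=\Delta t\int_\Omega\Psi\,\mathcal Lu^\star(t_{n+1})\,d\mu_\Omega+O_{\|\cdot\|_2}\bigl(\tfrac12M_2(\Delta t)^2\bigr).
\]
For the space part, I identify \(\widetilde{\mathbb K}z\) with \(-\int_\Omega\Psi\,\mathcal L(\Psi^\top z)\,d\mu_\Omega\). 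Here \(\widetilde{\mathbb K}_{ab}=\nu\int_\Omega\nabla\psi_a\cdot\nabla\psi_b\,d\mu_\Omega=-(\mathcal L\psi_b,\psi_a)\) by Green's formula. Consequently \(\Delta t\,\widetilde{\mathbb K}z_{n+1}^\dagger\) is the coefficient vector of \(-\Delta t\,P_\tau\mathcal LP_\tau u^\star(t_{n+1})\).

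Combining the two parts, the leading terms collapse to \(\Delta t\) times the coefficient vector of \(P_\tau\mathcal L\bigl(u^\star-P_\tau u^\star\bigr)(t_{n+1})\). By the isometry on the span, its Euclidean norm equals the \(L^2\) norm of that field, which the graph-domain condition bounds by \(C_{\mathcal L}\varepsilon_{M,\tau}(u^\star)\). This gives the two-term estimate.

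With \(p=1\), \(S_T=1\) (since every product \(\mathcal F_{n:m}\) has norm at most \(1\)) and \(C_b=C_{\mathcal L}+\tfrac12M_2\), \(\mathcal E_{\rm dyn}\) holds. For the sharper accumulated bound I will not use the coarse \(C_b\) form. Instead I will apply the first inequality of Lemma~\ref{lem:deterministic-retained-dynamics-bias} directly and sum at most \(N\) residuals using \(N\Delta t=T\). This gives \(T C_{\mathcal L}\varepsilon_{M,\tau}+\tfrac12M_2T\Delta t\).

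The main obstacle is the identification of \(\widetilde{\mathbb K}\) with \(-P_\tau\mathcal L\) on the effective span. Green's formula requires the boundary terms \(\int_{\partial\Omega}\psi_a\,\partial_\nu\psi_b\) to vanish and \(\mathcal L\psi_b\in L^2\), which fails for generic nonconforming features. I would handle this by reading the hypothesis that "the graph residual is well defined" as including this integration by parts on \(\operatorname{span}\Psi\). Equivalently, I would define \(P_\tau\mathcal L\) on the span through the bilinear form \(a\), so that \(\widetilde{\mathbb K}\) is its matrix by definition and the graph condition is stated for that operator. With that reading settled, the remaining steps are routine.
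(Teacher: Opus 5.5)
Your proposal is correct and follows essentially the same route as the paper's sketch. Both use a backward Taylor expansion of the projected trajectory, \(L^2\)-orthonormality of \(\Psi\) (with \(\|F\|_2\le1\) supplying \(S_T=1\)), and the graph-domain hypothesis to bound the commutator term \(P_\tau\mathcal L(I-P_\tau)u^\star\). The identification of \(\widetilde{\mathbb K}\) with \(-P_\tau\mathcal L\) on \(\operatorname{span}\Psi\), which you flag, is left implicit in the paper's phrase ``the graph residual is well defined.'' Your reading of that hypothesis therefore matches its intent.
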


Lemma~\ref{lem:implicit-euler-heat-consistency} is the nonconforming route: it
shows exactly what extra graph-domain information is needed when the reference
trajectory is the \(L^2\)-projection onto the effective feature space. This is
useful for spectral truncations and for nonconforming feature families when
the displayed graph residual can be certified. Boundary-compatible random
features allow a cleaner route. In that case the reference trajectory is the
Galerkin heat solution itself, the weak form handles the unbounded generator,
and a parabolic Ritz quantity replaces the graph residual. The proof of
Lemma~\ref{lem:implicit-euler-heat-consistency} is sketched in
Appendix~\ref{app:aux-proofs}; the Supplementary Material gives the remaining
technical details.

\begin{lemma}[Conforming Galerkin consistency via Ritz approximation]
\label{lem:ritz-heat-consistency}
Let \(A=-\mathcal L\) be the homogeneous-Dirichlet heat operator with
fixed diffusivity \(\nu>0\) and bilinear form
\[
a(u,v)=\nu\int_\Omega \nabla u(x)\cdot\nabla v(x)\,d\mu_\Omega(x),
\qquad u,v\in H^1_0(\Omega).
\]
Let \(V_\tau=\operatorname{span}\{\psi_1,\ldots,\psi_r\}\subset H^1_0(\Omega)\)
be a conforming effective-rank space satisfying
Construction~\ref{ass:mass-whitened-rfm}, and let
\(R_\tau:H^1_0(\Omega)\to V_\tau\) be the Ritz projection
\[
a(R_\tau u-u,v)=0,\qquad v\in V_\tau .
\]
Let \(u_\tau^{\rm G}(t)=\Psi^\top z_\tau^{\rm G}(t)\in V_\tau\) solve the
semi-discrete Galerkin heat equation
\[
(\partial_t u_\tau^{\rm G}(t),v)_{L^2}+a(u_\tau^{\rm G}(t),v)=0,
\qquad v\in V_\tau,
\qquad
u_\tau^{\rm G}(0)=R_\tau u^\star(\cdot,0).
\]
Choose reference coefficients \(z_n^\dagger=z_\tau^{\rm G}(t_n)\).
Assume
\[
u^\star,\partial_tu^\star\in C([0,T];H^1_0(\Omega)).
\]
Define
\[
\varepsilon^{\rm G}_{M,\tau}(u^\star)
=
\sup_{0\le t\le T}
\|u^\star(\cdot,t)-R_\tau u^\star(\cdot,t)\|_{L^2(\Omega)}
+
\int_0^T
\|\partial_tu^\star(\cdot,t)-R_\tau\partial_tu^\star(\cdot,t)\|_{L^2(\Omega)}
\,dt .
\]
Then
\[
\sup_{0\le n\le N}
\|u^\star(\cdot,t_n)-\Psi^\top z_n^\dagger\|_{L^2(\Omega)}
\le \varepsilon^{\rm G}_{M,\tau}(u^\star).
\]
If \(u_\tau^{\rm G}\in C^2([0,T];L^2(\Omega))\) and
\[
M_{2,\tau}^{\rm G}
=\sup_{0\le t\le T}
\|\partial_{tt}u_\tau^{\rm G}(t)\|_{L^2(\Omega)}<\infty,
\]
then the implicit-Euler heat residual satisfies
\[
\|b_{n+1}\|_2\le \tfrac12 M_{2,\tau}^{\rm G}(\Delta t)^2,\qquad 0\le n<N,
\]
and hence
\[
\sup_{0\le n\le N}
\|\Psi^\top(z_n^\dagger-\bar z_n)\|_{L^2(\Omega)}
\le \tfrac12 M_{2,\tau}^{\rm G}T\,\Delta t .
\]
Consequently the contraction theorem applies to conforming effective-rank spaces with
the Galerkin approximation error in place of \(\varepsilon_{M,\tau}(u^\star)\),
\(p=1\), and no additional graph-domain generator-residual term. Equivalently, it verifies
\(\mathcal E_{\rm dyn}(\tfrac12M_{2,\tau}^{\rm G},1,1)\) for the Galerkin
reference. This is the theoretical support used for the Dirichlet-bubble
benchmark; uniform constants require a uniform bound on \(M_{2,\tau}^{\rm G}\).
\end{lemma}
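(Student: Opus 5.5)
The proof splits into two independent parts: a spatial semi-discrete error estimate for the Galerkin reference, and a time-truncation estimate for implicit Euler applied to the Galerkin ODE. Throughout we use Lemma~\ref{lem:mass-whitening}. In effective coordinates \(\widetilde{\mathbb M}=I_r\), so \(\|\Psi^\top a\|_{L^2}=\|a\|_2\). The semi-discrete Galerkin equation is therefore the ODE \(\dot z_\tau^{\rm G}=-\widetilde{\mathbb K}z_\tau^{\rm G}\), with \(\widetilde{\mathbb K}\succeq0\) symmetric because \(a\) is symmetric and coercive on \(H^1_0\).

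\emph{Spatial part.} We use the classical Wheeler splitting
\[
u^\star-u_\tau^{\rm G}=(u^\star-R_\tau u^\star)+(R_\tau u^\star-u_\tau^{\rm G})=:\rho+\theta .
\]
Here \(\theta(t)\in V_\tau\) and \(\theta(0)=0\) by the choice of initial datum. Take the weak form of heat, \((\partial_t u^\star,v)+a(u^\star,v)=0\) for \(v\in V_\tau\subset H^1_0\), and subtract the Galerkin equation. Since \(a(R_\tau u^\star,v)=a(u^\star,v)\), this gives
\[
(\partial_t\theta,v)+a(\theta,v)=-(\partial_t\rho,v),\qquad v\in V_\tau .
\]
The assumption \(\partial_t u^\star\in C([0,T];H^1_0)\) is what lets us commute \(R_\tau\) with \(\partial_t\), so that \(\partial_t\rho=\partial_tu^\star-R_\tau\partial_tu^\star\). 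Test with \(v=\theta\) and drop \(a(\theta,\theta)\ge0\). This yields \(\tfrac12\tfrac{d}{dt}\|\theta\|^2\le\|\partial_t\rho\|\|\theta\|\), hence \(\tfrac{d}{dt}\|\theta\|\le\|\partial_t\rho\|\); at zeros of \(\theta\) we argue with \(\sqrt{\|\theta\|^2+\epsilon}\) and let \(\epsilon\to0\). Integrating from \(0\) gives \(\|\theta(t)\|\le\int_0^T\|\partial_t\rho\|\,dt\). The triangle inequality then gives
\[
\|u^\star(t)-u_\tau^{\rm G}(t)\|\le\|\rho(t)\|+\int_0^T\|\partial_t\rho\|\,dt\le\varepsilon^{\rm G}_{M,\tau}(u^\star).
\]
Evaluating at \(t=t_n\) gives the first claim.

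\emph{Time part.} Write \(z(t)=z_\tau^{\rm G}(t)\). By Lemma~\ref{lem:mass-whitening} the residual is
\[
b_{n+1}=z(t_{n+1})-Fz(t_n)=F\bigl[(I+\Delta t\widetilde{\mathbb K})z(t_{n+1})-z(t_n)\bigr].
\]
Using \(\widetilde{\mathbb K}z(t_{n+1})=-\dot z(t_{n+1})\), the bracket equals \(z(t_{n+1})-z(t_n)-\Delta t\,\dot z(t_{n+1})\). This is the backward Taylor remainder \(-\int_{t_n}^{t_{n+1}}(s-t_n)\ddot z(s)\,ds\), whose Euclidean norm is at most \(\tfrac12(\Delta t)^2\sup\|\ddot z\|_2\). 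By the whitening isometry, \(\|\ddot z(s)\|_2=\|\partial_{tt}u_\tau^{\rm G}(s)\|_{L^2}\le M_{2,\tau}^{\rm G}\). Since \(\widetilde{\mathbb K}\succeq0\) is symmetric, \(\|F\|_2\le1\), and so \(\|b_{n+1}\|_2\le\tfrac12M_{2,\tau}^{\rm G}(\Delta t)^2\).

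\emph{Conclusion.} The same bound \(\|F\|_2\le1\) gives \(\|\mathcal F_{n:m}\|_2\le1\), so \(S_T=1\). The residual bound has the form \(C_b\Delta t\,(\Delta t)^1\) with \(C_b=\tfrac12M_{2,\tau}^{\rm G}\), which is stronger than \(\mathcal E_{\rm dyn}(\tfrac12M_{2,\tau}^{\rm G},1,1)\), and the \(\varepsilon\) contribution is absent. Lemma~\ref{lem:deterministic-retained-dynamics-bias} then gives
\[
\|\Psi^\top(z_n^\dagger-\bar z_n)\|_{L^2}\le\sum_{k<n}\|b_{k+1}\|_2\le\tfrac12M_{2,\tau}^{\rm G}\,n(\Delta t)^2\le\tfrac12M_{2,\tau}^{\rm G}T\Delta t .
\]

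\emph{Main obstacle.} The delicate step is the spatial energy argument. It requires \(R_\tau\) to commute with \(\partial_t\), which holds because \(R_\tau\) is a bounded linear map on \(H^1_0\) and \(\partial_t u^\star\in C([0,T];H^1_0)\). It also requires \(\tfrac{d}{dt}\|\theta\|\) to be handled where \(\theta=0\), which the \(\epsilon\)-regularization resolves. Everything else reduces to finite-dimensional linear algebra in whitened coordinates.
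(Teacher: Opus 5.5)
Your proposal is correct and follows essentially the same route as the paper's proof sketch. The Wheeler splitting \(u^\star-u_\tau^{\rm G}=\rho+\theta\), with Ritz orthogonality and an energy estimate for \(\theta\), gives the parabolic Galerkin bound \(\varepsilon^{\rm G}_{M,\tau}(u^\star)\); the backward Taylor remainder of the whitened Galerkin coefficient trajectory, combined with \(\|F\|_2\le 1\), gives the \(\tfrac12 M_{2,\tau}^{\rm G}(\Delta t)^2\) residual and \(S_T=1\).
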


This is the conforming route used later for the Dirichlet-bubble benchmark:
boundary compatibility moves the consistency burden from a graph-domain
residual to the Ritz approximation quantity and the Galerkin time-regularity
bound \(M_{2,\tau}^{\rm G}\). A proof sketch is given in
Appendix~\ref{app:aux-proofs}; the Supplementary Material contains the full
auxiliary proof details.

\subsubsection{Kalman Error}
The stochastic part of the final bound is now purely linear-Gaussian. The next
lemma says that, for data generated by the deterministic effective-rank
reference trajectory, the Kalman covariance controls the mean-square error of
the posterior mean in coefficient space.

\begin{lemma}[Kalman mean error for a deterministic effective-rank reference trajectory]
\label{lem:deterministic-truth-kalman-mean-error}
Condition on the mass-whitened feature draw and the observation matrices
\(\{H_n\}_{n=0}^N\). Let a deterministic effective-rank trajectory satisfy
\[
\bar z_{n+1}=F_n\bar z_n,\qquad 0\le n<N,
\]
and let the data be generated by
\[
y_n=H_n\bar z_n+\xi_n,\qquad
\xi_n\sim\mathcal N(0,\sigma^2I_{N_o}),
\]
with independent noises. Run the Kalman filter with the same forecast
matrices \(F_n\), observation matrices \(H_n\), and observation covariance
\(\sigma^2I_{N_o}\). Assume the initial forecast mean satisfies
\(m_0^f=\bar z_0\), the forecast covariances \(P_n^f\) are positive definite,
and the covariance recursion uses positive semidefinite process covariance
\(Q_n\succeq0\). Then, for every \(0\le n\le N\),
\[
\mathbb E_\xi\!\left[
\|m_n^a-\bar z_n\|_2^2
\;\middle|\;\{H_k\}_{k=0}^N
\right]
\le \operatorname{tr}P_n^a .
\]
\end{lemma}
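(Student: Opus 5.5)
The plan is to prove the stronger matrix inequality
\[
\Sigma_n^a:=\mathbb E_\xi\!\left[(m_n^a-\bar z_n)(m_n^a-\bar z_n)^\top\;\middle|\;\{H_k\}\right]\preceq P_n^a ,
\]
and then take traces, since \(A\preceq B\) implies \(\operatorname{tr}A\le\operatorname{tr}B\). Everything is conditioned on the feature draw and on \(\{H_k\}_{k=0}^N\). The key preliminary observation is that the covariance recursion \eqref{eq:kalman-analysis}--\eqref{eq:kalman-forecast} of Theorem~\ref{thm:linear-gaussian-rfm} never uses the data \(y_n\). Consequently \(P_n^f\), \(S_n\), \(K_n\) and \(P_n^a\) are deterministic given \(\{H_k\}\) and the model matrices, and the only randomness is the noise \(\{\xi_k\}\). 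The matrix \(S_n=H_nP_n^fH_n^\top+\sigma^2I_{N_o}\) is invertible because \(\sigma^2>0\), so the gains are well defined.

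I will track the forecast error \(e_n^f=m_n^f-\bar z_n\) and the analysis error \(e_n^a=m_n^a-\bar z_n\).
\begin{itemize}
\item \emph{Analysis step.} Substituting \(y_n=H_n\bar z_n+\xi_n\) into the mean update gives
\[
e_n^a=(I-K_nH_n)e_n^f+K_n\xi_n .
\]
\item \emph{Forecast step.} Since \(\bar z_{n+1}=F_n\bar z_n\) with no process noise in the truth,
\[
e_{n+1}^f=F_ne_n^a .
\]
\end{itemize}
By induction, \(e_n^f\) is a deterministic linear function of \(\xi_0,\ldots,\xi_{n-1}\), and \(e_0^f=0\) because \(m_0^f=\bar z_0\). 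Hence all errors have mean zero, and \(e_n^f\) is independent of \(\xi_n\).

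The induction hypothesis is \(\Sigma_n^f\preceq P_n^f\), where \(\Sigma_n^f=\mathbb E[e_n^f(e_n^f)^\top]\).
\begin{itemize}
\item \emph{Base case.} \(\Sigma_0^f=0\preceq P_0^f\).
\item \emph{Analysis.} Independence kills the cross terms, and congruence preserves the Loewner order. This gives
\[
\Sigma_n^a=(I-K_nH_n)\Sigma_n^f(I-K_nH_n)^\top+\sigma^2K_nK_n^\top\preceq(I-K_nH_n)P_n^f(I-K_nH_n)^\top+\sigma^2K_nK_n^\top=P_n^a .
\]
Here I use the Joseph form in \eqref{eq:kalman-analysis}, which equals the posterior covariance for the given gain regardless of optimality.
\item \emph{Forecast.} Since \(Q_n\succeq0\),
\[
\Sigma_{n+1}^f=F_n\Sigma_n^aF_n^\top\preceq F_nP_n^aF_n^\top\preceq F_nP_n^aF_n^\top+Q_n=P_{n+1}^f .
\]
\end{itemize}
This closes the induction, and taking traces gives the claim.

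The only delicate point is bookkeeping rather than estimation. I need to justify that \(K_n\) is nonrandom given \(\{H_k\}\), so that it can be pulled out of the expectation, and that \(\xi_n\) is independent of \(e_n^f\). Both follow from the data-independence of the covariance recursion and the independence of the noises. A second point is that the filter's model includes \(Q_n\) while the truth has none. This mismatch only helps, because it enters with a favorable sign in the forecast step. The hypothesis \(P_n^f\succ0\) is not actually needed beyond guaranteeing that the recursion is well defined.
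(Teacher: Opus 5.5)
Your proposal is correct and follows the paper's proof essentially step for step: the same error recursions \(e_n^a=(I-K_nH_n)e_n^f+K_n\xi_n\) and \(e_{n+1}^f=F_ne_n^a\), the same Loewner-order induction from \(e_0^f=0\) through the Joseph form and \(Q_n\succeq0\), and a final trace. Your extra bookkeeping makes explicit what the paper uses implicitly: \(K_n\) and \(P_n^a\) do not depend on the data given \(\{H_k\}\), so \(e_n^f\) depends only on \(\xi_{0:n-1}\). You are also right that \(P_n^f\succ0\) is not needed for this lemma, since \(\sigma^2>0\) already makes \(S_n\) invertible.
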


\begin{proof}
Let \(e_n^f=m_n^f-\bar z_n\) and \(e_n^a=m_n^a-\bar z_n\). At time \(n\),
\[
e_n^a=(I-K_nH_n)e_n^f+K_n\xi_n,
\]
where
\[
K_n=P_n^fH_n^\top(H_nP_n^fH_n^\top+\sigma^2I)^{-1}.
\]
If \(\mathbb E_\xi[e_n^f(e_n^f)^\top\mid H]\preceq P_n^f\), then independence
of \(e_n^f\) and the current zero-mean noise \(\xi_n\), together with the
Joseph covariance identity, gives
\[
\mathbb E_\xi[e_n^a(e_n^a)^\top\mid H]
\preceq
(I-K_nH_n)P_n^f(I-K_nH_n)^\top+\sigma^2K_nK_n^\top
=P_n^a .
\]
The forecast error satisfies \(e_{n+1}^f=F_ne_n^a\), while the Kalman
forecast covariance is \(P_{n+1}^f=F_nP_n^aF_n^\top+Q_n\). Hence
\(\mathbb E_\xi[e_{n+1}^f(e_{n+1}^f)^\top\mid H]\preceq P_{n+1}^f\).
The induction starts from \(e_0^f=0\). Taking traces proves the claim.
\end{proof}

\begin{remark}[Initial bias]
\label{rem:initial-bias}
Lemma~\ref{lem:deterministic-truth-kalman-mean-error} is stated for the
matched initial forecast mean \(m_0^f=\bar z_0\), which isolates the
observation-noise contribution to the posterior mean error. If
\(m_0^f\neq \bar z_0\), the same recursion gives an additional propagated
initial-bias term. In particular, under a finite-horizon stability bound on
the forecast/update error propagators, the right-hand side is augmented by a
term of the form
\(\sup_{0\le n\le N}\|\mathcal E_{n:0}\|_2^2
\|m_0^f-\bar z_0\|_2^2\), where \(\mathcal E_{n:0}\) denotes the corresponding
forecast/update error propagator. We use the matched initialization in the
main theorem to keep the displayed contraction bound focused on feature
approximation, time consistency, and Bayesian estimation.
\end{remark}

The final estimate is obtained by adding three controlled quantities:
deterministic approximation of the heat trajectory by the effective-rank
space, deterministic dynamics bias from the chosen time integrator, and the
Kalman covariance of the coefficient posterior. These give the three terms
\(\varepsilon_{M,\tau}^2\), \((\Delta t)^{2p}\), and
\(r\sigma^2/[N_o(1-\eta)]\). All objects in the theorem are fixed in the
following setup.

Let \(t_n=n\Delta t\), \(0\le n\le N\), and \(T=N\Delta t\). Suppose the
mass-whitened feature draw satisfies Construction~\ref{ass:mass-whitened-rfm}, the
approximation event of Assumption~\ref{ass:retained-approximation}, and the
leverage event \(\mathcal E_{\rm lev}(L_\star)\). Let
\(\Psi=(\psi_1,\ldots,\psi_r)^\top\), so that
\[
\int_\Omega\Psi(x)\Psi(x)^\top\,d\mu_\Omega(x)=I_r,
\]
and choose \(z_n^\dagger\in\mathbb R^r\) such that
\[
\sup_{0\le n\le N}
\|u^\star(\cdot,t_n)-\Psi^\top z_n^\dagger\|_{L^2(\Omega)}
\le \varepsilon_{M,\tau}(u^\star).
\]
Let \(F_n\in\mathbb R^{r\times r}\) be the effective-rank forecast matrices, and
assume the dynamics input
\(\mathcal E_{\rm dyn}(C_b,p,S_T)\) of
Assumption~\ref{ass:retained-dynamics-consistency} holds for the same
reference trajectory \(\{z_n^\dagger\}_{n=0}^N\). For implicit-Euler heat with
\(\widetilde{\mathbb K}\succeq0\), the certificates in
Lemmas~\ref{lem:implicit-euler-heat-consistency}--\ref{lem:ritz-heat-consistency}
verify this input with \(S_T=1\) under their stated conditions.

Define the deterministic effective-rank reference trajectory
\[
\bar z_0=z_0^\dagger,\qquad \bar z_{n+1}=F_n\bar z_n .
\]
At each analysis time draw \(X_{n,1},\ldots,X_{n,N_o}\) independently from
\(\mu_\Omega\), independently across \(n\) and independently of the feature
draw, and set \((H_n)_{ia}=\psi_a(X_{n,i})\). The effective-rank observation model is
\[
y_n=H_n\bar z_n+\xi_n,\qquad
\xi_n\sim\mathcal N(0,\sigma^2I_{N_o}),
\]
with independent noises. Run the Kalman update with initial forecast mean
\(m_0^f=\bar z_0\), positive definite forecast covariances \(P_n^f\succ0\),
and positive semidefinite process covariances \(Q_n\succeq0\). Let
\(m_n^a\) and \(P_n^a\) be the analysis mean and covariance.
For implicit-Euler heat, \(P_n^f\succ0\) is preserved, for example, if
\(P_0^f\succ0\), \(Q_n\succeq0\), and
\(F_n=(I_r+\Delta t\,\widetilde{\mathbb K})^{-1}\), since each \(F_n\) is
invertible.

The theorem is therefore a stochastic contraction statement for the
coefficient model, with deterministic PDE consistency terms added through the
reference trajectory and the dynamics-bias certificate.

\subsubsection{Main Theorem}

\begin{maintheorem}[Posterior contraction and PDE consistency in mass-whitened RFM coordinates]
\label{thm:linear-heat-retained-contraction}
There is a universal constant \(C>0\) such that, if \(0<\eta<1\) and
\[
N_o\ge C L_\star\eta^{-2}\log\frac{2r(N+1)}{\delta_{\rm gram}},
\]
then, conditional on the mass-whitened feature draw,
\[
\max_{0\le n\le N}
\|N_o^{-1}H_n^\top H_n-I_r\|_2\le\eta
\]
with probability at least \(1-\delta_{\rm gram}\). On this event,
\[
\sup_{0\le n\le N}\operatorname{tr}P_n^a
\le \frac{r\sigma^2}{N_o(1-\eta)}
\]
and, for \(\widehat u_n=\Psi^\top m_n^a\),
\[
\sup_{0\le n\le N}
\mathbb E\!\left[
\|\widehat u_n-u^\star(\cdot,t_n)\|_{L^2(\Omega)}^2
\;\middle|\; \{H_k\}_{k=0}^N
\right]
\le
C_T\left[
\varepsilon_{M,\tau}(u^\star)^2
+(\Delta t)^{2p}
+\frac{r\sigma^2}{N_o(1-\eta)}
\right],
\]
where the expectation is over the observation noises \(\{\xi_k\}_{k=0}^N\).
The constant \(C_T\) depends on \(T\), \(S_T\), and \(C_b\), but not on
\(N_o\) or \(\sigma\); it is independent of \(r\) if \(C_b\) and \(S_T\) are
uniform over the effective-rank spaces. If the approximation and leverage events
have failure probabilities \(\delta_{\rm app}\) and \(\delta_{\rm lev}\), the
joint success probability over the feature draw and observation locations is at
least
\[
1-\delta_{\rm app}-\delta_{\rm lev}-\delta_{\rm gram}.
\]
\end{maintheorem}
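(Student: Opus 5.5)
The proof combines four ingredients: Lemma~\ref{lem:time-uniform-retained-gram} for the Gram event, a Kalman information-form bound for the covariance, Lemma~\ref{lem:deterministic-truth-kalman-mean-error} for the stochastic error, and Lemma~\ref{lem:deterministic-retained-dynamics-bias} for the deterministic error.

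\textbf{Gram event.} The first claim is exactly Lemma~\ref{lem:time-uniform-retained-gram}. It applies under the stated sample-size condition on \(N_o\), conditional on a feature draw satisfying Construction~\ref{ass:mass-whitened-rfm} and \(\mathcal E_{\rm lev}(L_\star)\). We take \(C\) to be its universal constant.

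\textbf{Covariance bound.} Work on the event \(\max_n\|N_o^{-1}H_n^\top H_n-I_r\|_2\le\eta\). Since \(P_n^f\succ0\) and \(R_n=\sigma^2 I\), the Joseph-form analysis covariance equals the information-form covariance,
\[
P_n^a=\bigl((P_n^f)^{-1}+\sigma^{-2}H_n^\top H_n\bigr)^{-1}.
\]
Because \((P_n^f)^{-1}\succ0\), operator monotonicity of inversion gives
\[
P_n^a\preceq \sigma^2 (H_n^\top H_n)^{-1}.
\]
On the Gram event, \(H_n^\top H_n\succeq N_o(1-\eta)I_r\). Hence
\[
\operatorname{tr}P_n^a\le \frac{r\sigma^2}{N_o(1-\eta)}
\]
for every \(n\), uniformly in time and independent of the prior or of \(Q_n\). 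This is the key point: the bound comes from the current observation alone, so no propagation of forecast covariance through \(F_n\) is needed.

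\textbf{Reconstruction error.} Split the error as
\[
\widehat u_n-u^\star(t_n)
=\Psi^\top(m_n^a-\bar z_n)+\Psi^\top(\bar z_n-z_n^\dagger)+\bigl(\Psi^\top z_n^\dagger-u^\star(t_n)\bigr).
\]
Use \((a+b+c)^2\le 3(a^2+b^2+c^2)\) and take the conditional expectation over \(\xi\).
\begin{itemize}
\item First term: by the whitening isometry \(\|\Psi^\top a\|_{L^2}=\|a\|_2\) and Lemma~\ref{lem:deterministic-truth-kalman-mean-error}, its expected square is at most \(\operatorname{tr}P_n^a\), which is bounded by the previous step.
\item Second term: it is deterministic. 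Lemma~\ref{lem:deterministic-retained-dynamics-bias} bounds it by \(S_TC_bT(\varepsilon_{M,\tau}+(\Delta t)^p)\), so its square is at most \(2S_T^2C_b^2T^2(\varepsilon_{M,\tau}^2+(\Delta t)^{2p})\).
\item Third term: the approximation event bounds it by \(\varepsilon_{M,\tau}\).
\end{itemize}
Collecting the three pieces gives the displayed bound with
\[
C_T=3\max\{1+2S_T^2C_b^2T^2,\;1\}.
\]
This constant depends only on \(T\), \(S_T\) and \(C_b\), and is independent of \(r\) whenever \(C_b\) and \(S_T\) are uniform over the effective-rank spaces.

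\textbf{Probability.} The approximation and leverage events concern the feature draw. The Gram event holds with conditional probability at least \(1-\delta_{\rm gram}\) given that draw. Integrating the conditional bound over the draw and applying a union bound over the three failure events gives joint success probability at least \(1-\delta_{\rm app}-\delta_{\rm lev}-\delta_{\rm gram}\).

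\textbf{Main obstacle.} The delicate step is justifying the information-form identity and the monotonicity bound for the Joseph-form \(P_n^a\). The Joseph form equals the standard \((I-K_nH_n)P_n^f\) only at the optimal gain; that holds here because the filter uses the true \(R_n=\sigma^2I\). One also needs \(P_n^f\succ0\), which the stated setup supplies. A further subtlety is that Lemma~\ref{lem:deterministic-truth-kalman-mean-error} requires the data to be generated by \(\bar z_n\) rather than by \(z_n^\dagger\). The theorem's observation model is stated that way, and the mismatch between \(\bar z_n\) and \(z_n^\dagger\) is absorbed into the deterministic bias term.
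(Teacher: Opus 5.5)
Your proposal is correct and follows the paper's proof essentially step for step. It uses the time-uniform Gram lemma, the information-form bound \(P_n^a\preceq \sigma^2N_o^{-1}(1-\eta)^{-1}I_r\), the three-term split controlled by Lemma~\ref{lem:deterministic-truth-kalman-mean-error}, Lemma~\ref{lem:deterministic-retained-dynamics-bias} and the approximation event, and a final union bound, with your explicit \(C_T=3(1+2S_T^2C_b^2T^2)\) being a valid instance of the paper's unspecified constant.
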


\begin{proof}
The time-uniform Gram event follows directly from
Lemma~\ref{lem:time-uniform-retained-gram}. On this event,
\[
H_n^\top H_n\succeq N_o(1-\eta)I_r,\qquad 0\le n\le N.
\]
For any analysis time, the linear-Gaussian covariance update can be written in
information form as
\[
P_n^a=\left((P_n^f)^{-1}+\sigma^{-2}H_n^\top H_n\right)^{-1}.
\]
Since \(P_n^f\succ0\),
\[
P_n^a
\preceq
\left(\sigma^{-2}N_o(1-\eta)I_r\right)^{-1}
=\frac{\sigma^2}{N_o(1-\eta)}I_r.
\]
Taking traces and then the supremum over \(n\) proves the covariance
contraction bound.

By Lemma~\ref{lem:deterministic-retained-dynamics-bias},
\[
\sup_{0\le n\le N}
\left\|\Psi^\top(z_n^\dagger-\bar z_n)\right\|_{L^2(\Omega)}
\le
S_TC_bT\left(\varepsilon_{M,\tau}(u^\star)+(\Delta t)^p\right).
\]
The effective-rank approximation event gives
\[
\sup_{0\le n\le N}
\left\|u^\star(\cdot,t_n)-\Psi^\top z_n^\dagger\right\|_{L^2(\Omega)}
\le \varepsilon_{M,\tau}(u^\star).
\]
Lemma~\ref{lem:deterministic-truth-kalman-mean-error} and the
mass-whitening identity give
\[
\mathbb E_\xi\left[
\left\|\Psi^\top(m_n^a-\bar z_n)\right\|_{L^2(\Omega)}^2
\;\middle|\; \{H_k\}_{k=0}^N
\right]
\le \operatorname{tr}P_n^a.
\]
Combining the three errors by \((a+b+c)^2\le3(a^2+b^2+c^2)\) gives the
displayed risk bound after absorbing constants depending only on
\(T,S_T,C_b\) into \(C_T\). The joint high-probability statement is the union
bound over the approximation, leverage, and time-uniform Gram events.
\end{proof}

\begin{remark}[Per-time Gram versus accumulated information]
\label{rem:accumulated-observability}
The per-time empirical Gram lower bound in
Theorem~\ref{thm:linear-heat-retained-contraction} is a simple sufficient
condition, not a necessary condition for contraction. If \(N_o<r\), then
\(H_n^\top H_n\) cannot be full rank at a single analysis time. The covariance
update still satisfies the information-form identity
\[
P_n^a
=\left((P_n^f)^{-1}+\sigma^{-2}H_n^\top H_n\right)^{-1}
\]
whenever \(P_n^f\succ0\). Hence, for any deterministic lower bound
\[
\lambda_{\min}\!\left((P_n^f)^{-1}+\sigma^{-2}H_n^\top H_n\right)
\ge \alpha_n>0,
\]
one has \(\operatorname{tr}P_n^a\le r/\alpha_n\). In streaming regimes,
such lower bounds may come from accumulated observability over the filtering
window, equivalently from a stacked observability Gramian involving the
propagators and all past observation matrices. The theorem uses the stronger
per-analysis Gram event because it gives a transparent high-probability
sample-size condition.
\end{remark}

\begin{remark}[Deterministic bias convention]
\label{rem:model-error-convention}
The theorem keeps feature approximation and effective-rank dynamics residuals
as deterministic biases outside the Kalman state. The covariance trace is
therefore used in two roles: as the posterior variance in the coefficient
model, and as an upper bound for the mean-square error of the posterior mean
when the effective-rank reference trajectory generates the data. In
Theorem~\ref{thm:linear-heat-retained-contraction}, \(Q_n\) is allowed as an
algorithmic covariance inflation or prescribed process covariance in the
filter, while the reference trajectory generating the displayed risk bound is
deterministic. A fully stochastic model-error interpretation would require
additional \(Q_n\)-dependent stability terms
\cite{sanzalonso2015longtime,sanzalonso2025longtime}.
Thus the posterior dimension is \(r\), whereas the deterministic consistency
constant can still depend on the approximation family unless the Galerkin
time-regularity quantities are uniformly bounded.
\end{remark}

\subsection{Feature-family specializations}
\label{ssec:feature-corollaries}

Theorem~\ref{thm:linear-heat-retained-contraction} is feature-agnostic. A
feature family enters through the approximation error
\(\varepsilon_{M,\tau}(u^\star)\), the leverage envelope \(L_\star\), and the
dynamics input \(\mathcal E_{\rm dyn}(C_b,p,S_T)\). This subsection supplies
feature-level inputs for the first two quantities; the dynamics input remains
the PDE-consistency certificate from
Lemma~\ref{lem:implicit-euler-heat-consistency} or
Lemma~\ref{lem:ritz-heat-consistency}. Thus the corollaries below should be
read as checkable inputs to Theorem~\ref{thm:linear-heat-retained-contraction},
not as separate contraction theorems. For homogeneous Dirichlet heat, vanilla
RFFs and unwindowed ELU features are nonconforming and therefore require the
graph-domain route or a boundary-compatible modification.

In the approximation statements below, the discarded mass-eigendirection tail
has the following concrete meaning. If
\(u_{M,n}^{\rm MC}=\Phi^\top c_n^{\rm MC}\) is the Monte Carlo feature
approximant before the mass cutoff and \(P_\tau\) is the \(L^2\)-orthogonal
projector onto \(\operatorname{span}\Psi\), then
\[
\varepsilon_{\rm disc}(\tau)
=\sup_{0\le n\le N}
\|(I-P_\tau)u_{M,n}^{\rm MC}\|_{L^2(\Omega)}
=\sup_{0\le n\le N}
\left(\sum_{j\notin I_\tau}\lambda_j
\left|(V^\top c_n^{\rm MC})_j\right|^2\right)^{1/2}.
\]
The quantities \(\varepsilon_{\rm disc}^{\rm g}(\tau)\) and
\(\varepsilon_{\rm disc}^{\rm elu}(\tau)\) are this tail for the Gaussian-RFF
and ELU approximants, respectively.

\begin{corollary}[Gaussian RFF approximation and leverage certificate]
\label{cor:gaussian-rff-specialization}
On \(\Omega=[0,1]^d\), let
\(\varphi_j(x)=\sqrt 2\cos(\omega_j\cdot x+b_j)\) with
\(\omega_j\stackrel{\rm iid}{\sim}\mathcal N(0,\gamma^2I_d)\) and
\(b_j\sim\operatorname{Unif}(0,2\pi)\). The associated kernel is
the squared exponential
\(k(x,y)=\exp(-\gamma^2\|x-y\|_2^2/2)\), and the Gaussian-RFF
integral class \(\mathcal F_{\rm g}(C_u)\) consists of targets
representable as
\(u(x)=\int \sqrt 2\cos(\omega\cdot x+b)\,a(\omega,b)\,d\pi(\omega,b)\)
with \(\|a\|_\infty\le C_u\)
\cite{rahimi2008random,bach2017equivalence}. If
\(u^\star(\cdot,t_n)\in\mathcal F_{\rm g}(C_u)\) uniformly in
\(n\), then with probability at least
\(1-\delta_{\rm app}\) the effective-rank approximation event of
Assumption~\ref{ass:retained-approximation} holds with
\[
\varepsilon_{M,\tau}(u^\star)
\le
\sqrt 2\,C_u\sqrt{C_{\rm RF}\,M^{-1}\log\bigl(2(N+1)/\delta_{\rm app}\bigr)}
+\varepsilon_{\rm disc}^{\rm g}(\tau),
\]
where \(C_{\rm RF}\) is a universal constant and
\(\varepsilon_{\rm disc}^{\rm g}(\tau)\) is the discarded
mass-eigendirection tail of the Monte Carlo approximant. The grid-certificate
leverage envelope in the Supplementary Material applies with the global
raw bounds \(E_\Phi=2M\) and
\(G_\Phi=2\sum_j\|\omega_j\|_2^2\), the latter controlled with
probability \(1-\delta_{\rm grad}\) by the Laurent-Massart
\(\chi^2\) tail
\(G_\Phi\le 2\gamma^2[Md+2\sqrt{Md\log(1/\delta_{\rm grad})}+2\log(1/\delta_{\rm grad})]\).
Analytic certification of \(L_\star\) through this gradient bound carries the
additional failure probability \(\delta_{\rm grad}\).
The feature-certificate failure probability is therefore at most
\(\delta_{\rm app}+\delta_{\rm grad}\), before the Gram event is added.
If either the graph-residual certificate of
Lemma~\ref{lem:implicit-euler-heat-consistency} or the conforming Galerkin
condition of Lemma~\ref{lem:ritz-heat-consistency} is also verified, these
inputs satisfy the hypotheses of
Theorem~\ref{thm:linear-heat-retained-contraction}.
\end{corollary}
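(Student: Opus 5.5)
The corollary has three parts, and I would prove each separately: the approximation bound, the gradient-envelope tail, and the hand-off to Theorem~\ref{thm:linear-heat-retained-contraction}.

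\emph{Approximation bound.} For each fixed \(n\) I take the Monte Carlo approximant
\[
u_{M,n}^{\rm MC}(x)=M^{-1}\sum_{j=1}^M \sqrt2\cos(\omega_j\cdot x+b_j)\,a_n(\omega_j,b_j),
\]
i.e. \(c_n^{\rm MC}=M^{-1}(a_n(\omega_j,b_j))_j\). The \(M\) summands are iid random elements of \(L^2([0,1]^d)\) with mean \(u^\star(\cdot,t_n)\). Since \(\mu_\Omega\) is a probability measure on \([0,1]^d\), each summand has \(L^2\) norm at most \(\sqrt2\,C_u\). A Hilbert-space bounded-differences (McDiarmid) argument then applies: Rahimi--Recht style, or Pinelis' inequality. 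It gives, with probability at least \(1-\delta\),
\[
\|u^\star(\cdot,t_n)-u_{M,n}^{\rm MC}\|_{L^2}\le \sqrt2 C_u\sqrt{C_{\rm RF}M^{-1}\log(2/\delta)}.
\]
Here the expectation term \(\sqrt2C_u/\sqrt M\) is absorbed into the logarithmic term by enlarging \(C_{\rm RF}\), using \(\log 2>0\). Taking \(\delta=\delta_{\rm app}/(N+1)\) and a union bound over \(0\le n\le N\) yields the displayed \(\log(2(N+1)/\delta_{\rm app})\). Note that the same feature draw is used for every \(n\); only the weights \(a_n\) change, so the union bound is legitimate.

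\emph{Effective-rank reference.} I set \(z_n^\dagger\) to be the coefficients of \(P_\tau u_{M,n}^{\rm MC}\) in the orthonormal basis \(\Psi\). This is possible because \(\operatorname{span}\Psi\subset\operatorname{span}\Phi\) and \(\Psi\) is \(L^2\)-orthonormal by Lemma~\ref{lem:mass-whitening}. The triangle inequality then gives
\[
\|u^\star-\Psi^\top z_n^\dagger\|\le\|u^\star-u_{M,n}^{\rm MC}\|+\|(I-P_\tau)u_{M,n}^{\rm MC}\|.
\]
The second term is at most \(\varepsilon^{\rm g}_{\rm disc}(\tau)\) by its definition. The eigen-expression for this tail follows from expanding \(c_n^{\rm MC}\) in the eigenbasis \(V\) of \(\mathbb M\): the retained directions are exactly \(\operatorname{span}\Psi\), and the discarded ones are \(\mathbb M\)-orthogonal to them. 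Together these establish Assumption~\ref{ass:retained-approximation}.

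\emph{Leverage envelopes and gradient tail.} The raw bounds are pointwise. First, \(\|\Phi(x)\|_2^2=\sum_j 2\cos^2(\cdot)\le 2M\). Second, \(\nabla\varphi_j=-\sqrt2\sin(\cdot)\,\omega_j\), so \(\sum_j\|\nabla\varphi_j\|^2\le 2\sum_j\|\omega_j\|^2\). The quantity \(\sum_j\|\omega_j\|_2^2/\gamma^2\) is \(\chi^2_{Md}\). The Laurent--Massart upper tail \(\chi^2_k\le k+2\sqrt{kx}+2x\), with probability \(1-e^{-x}\) and \(x=\log(1/\delta_{\rm grad})\), gives the stated bound on \(G_\Phi\). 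These are exactly the inputs required by the Supplementary grid-certificate lemma, which I take as given; it outputs \(L_\star\).

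\emph{Hand-off to the main theorem.} A union bound over the approximation and gradient events gives the failure probability \(\delta_{\rm app}+\delta_{\rm grad}\). Adding the graph-residual certificate (Lemma~\ref{lem:implicit-euler-heat-consistency}) or the Ritz certificate (Lemma~\ref{lem:ritz-heat-consistency}) supplies \(\mathcal E_{\rm dyn}\). All hypotheses of Theorem~\ref{thm:linear-heat-retained-contraction} are then met.

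The main obstacle is the Hilbert-space concentration step. I need a dimension-free inequality with the exact \(\log(2/\delta)\) form and a universal constant, and I would cite the Rahimi--Recht bounded-difference lemma for it. A second delicate point is that the reference coefficients for the Ritz route would differ, since that route uses the Galerkin solution. I would treat it simply by replacing \(\varepsilon_{M,\tau}\) with \(\varepsilon^{\rm G}_{M,\tau}\), as Lemma~\ref{lem:ritz-heat-consistency} states.
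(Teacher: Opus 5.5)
Your proposal is correct and follows the route the paper indicates (the details are deferred to the Supplementary Material): Rahimi--Recht Hilbert-space concentration for the Monte Carlo approximant, a union bound over the \(N+1\) analysis times, projection onto \(\operatorname{span}\Psi\) with the discarded mass-eigendirection tail handled by the triangle inequality, the pointwise envelopes \(2M\) and \(2\sum_j\|\omega_j\|_2^2\), the Laurent--Massart \(\chi^2_{Md}\) tail, and a final union bound. One point should be made explicit in the hand-off: the graph-residual route of Lemma~\ref{lem:implicit-euler-heat-consistency} uses the reference \(P_\tau u^\star(\cdot,t_n)\), not your \(P_\tau u^{\rm MC}_{M,n}\), but since \(P_\tau\) gives the \(L^2\)-best approximation in \(\operatorname{span}\Psi\), that reference satisfies the same bound and nothing else changes.
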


For Gaussian RFFs, the Monte Carlo approximation controls
\(\varepsilon_{M,\tau}\), while the gradient envelope controls \(L_\star\).

\begin{corollary}[Conditional ELU/RFM approximation and leverage certificate]
\label{cor:elu-rfm-specialization}
On \(\Omega=[0,1]^d\), let
\(\varphi_j(x)=s\,\sigma_\alpha(w_j\cdot x+b_j)\) with the ELU
activation
\(\sigma_\alpha(z)=z\cdot \mathbf 1\{z>0\}+\alpha(e^z-1)\cdot\mathbf 1\{z\le 0\}\)
and frozen \((w_j,b_j)\). Under the standard single-hidden-layer
RFM approximation hypothesis
\cite{chen2022bridging,chen2023timedependent,ming2025spectral},
\[
\varepsilon_{M,\tau}(u^\star)
\le
C_{\rm elu}(u^\star,T)
\sqrt{\log(2(N+1)/\delta_{\rm app})/M}
+\varepsilon_{\rm disc}^{\rm elu}(\tau)
\]
with probability at least \(1-\delta_{\rm app}\), where
\(\varepsilon_{\rm disc}^{\rm elu}(\tau)\) is the analogous discarded
mass-eigendirection tail.
The grid-certificate leverage envelope in the Supplementary Material applies with
\[
E_\Phi=B_{\rm elu}^2,\qquad
G_\Phi=s^2\max\{1,\alpha\}^2\sum_j\|w_j\|_2^2,
\]
where
\[
B_{\rm elu}^2
=s^2\sum_j\operatorname*{ess\,sup}_x
|\sigma_\alpha(w_j\cdot x+b_j)|^2;
\]
under the sampling convention
\(w_j\sim\mathcal N(0,a^2d^{-1}I_d)\),
\(b_j=-w_j\cdot c_j+\xi_j\), with centers \(c_j\in\Omega\) drawn
independently from the design distribution and
\(\xi_j\sim\mathcal N(0,\beta^2)\), a
union bound on Gaussian concentration gives, with probability at
least \(1-\delta_{\rm env}\),
\[
B_{\rm elu}\le
sM^{1/2}\Bigl[
\alpha+a\bigl(\sqrt d+\sqrt{2\log(2M/\delta_{\rm env})}\bigr)
+\beta\sqrt{2\log(2M/\delta_{\rm env})}
\Bigr].
\]
Analytic certification of \(L_\star\) through this envelope carries the
additional failure probability \(\delta_{\rm env}\).
The feature-certificate failure probability is therefore at most
\(\delta_{\rm app}+\delta_{\rm env}\), before the Gram event is added.
With the dynamics input \(\mathcal E_{\rm dyn}(C_b,p,S_T)\), these estimates
satisfy the hypotheses of
Theorem~\ref{thm:linear-heat-retained-contraction}. If the ELU features are
multiplied by a smooth zero-trace cutoff, Lemma~\ref{lem:ritz-heat-consistency}
verifies that input through the conforming Galerkin route.
\end{corollary}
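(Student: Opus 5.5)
The plan is to verify separately the three inputs that Theorem~\ref{thm:linear-heat-retained-contraction} takes from a feature family: the approximation event, the leverage envelope, and the dynamics input. Each is checked on its own event, and the failure probabilities are combined by a union bound at the end.

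\emph{Approximation.} I would apply the single-hidden-layer RFM approximation hypothesis of \cite{chen2022bridging,chen2023timedependent,ming2025spectral} at each fixed analysis time \(t_n\) with failure level \(\delta_{\rm app}/(N+1)\). This gives a Monte Carlo approximant \(u_{M,n}^{\rm MC}=\Phi^\top c_n^{\rm MC}\) with
\[
\|u^\star(\cdot,t_n)-u_{M,n}^{\rm MC}\|_{L^2(\Omega)}\le C_{\rm elu}(u^\star,T)\sqrt{\log(2(N+1)/\delta_{\rm app})/M}.
\]
A union bound over the \(N+1\) times makes this hold for all \(n\) simultaneously, and it produces the stated logarithm. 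To pass to the mass-whitened effective-rank space, I would set \(z_n^\dagger\) to be the coordinates of \(P_\tau u_{M,n}^{\rm MC}\) in the orthonormal system \(\Psi\). The triangle inequality
\[
\|u^\star-P_\tau u_{M,n}^{\rm MC}\|\le\|u^\star-u_{M,n}^{\rm MC}\|+\|(I-P_\tau)u_{M,n}^{\rm MC}\|
\]
then bounds the second piece by \(\varepsilon_{\rm disc}^{\rm elu}(\tau)\), by definition. This verifies Assumption~\ref{ass:retained-approximation}.

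\emph{Leverage envelopes.} The raw bound \(\sup_x\|\Phi(x)\|_2^2\le s^2\sum_j\operatorname{ess\,sup}_x|\sigma_\alpha(w_j\cdot x+b_j)|^2=B_{\rm elu}^2\) is immediate. For the gradient envelope, I would use that \(\sigma_\alpha\) is Lipschitz with constant \(\max\{1,\alpha\}\), since its derivative is \(1\) or \(\alpha e^z\le\alpha\). This gives \(|\nabla\varphi_j|\le s\max\{1,\alpha\}\|w_j\|_2\), and summing squares over \(j\) yields \(G_\Phi\).

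For the probabilistic bound on \(B_{\rm elu}\), I would proceed in three steps.
\begin{enumerate}
\item Write \(w_j\cdot x+b_j=w_j\cdot(x-c_j)+\xi_j\) and use \(|\sigma_\alpha(z)|\le\alpha+|z|\).
\item Use \(\|x-c_j\|_2\le\sqrt d\) on \([0,1]^d\). Then Gaussian Lipschitz concentration of \(\|w_j\|_2\), whose mean is at most \(a\), gives \(\|w_j\|_2\sqrt d\le a(\sqrt d+\sqrt{2\log(2M/\delta_{\rm env})})\) at level \(\delta_{\rm env}/(2M)\).
\item The scalar tail gives \(|\xi_j|\le\beta\sqrt{2\log(2M/\delta_{\rm env})}\) at the same level.
\end{enumerate}
A union bound over the \(2M\) events, followed by summing the \(M\) identical per-feature bounds, gives the displayed \(sM^{1/2}[\cdots]\). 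Plugging \(E_\Phi,G_\Phi\) into the Supplementary grid certificate then produces \(L_\star\), at the extra cost \(\delta_{\rm env}\).

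\emph{Dynamics input and assembly.} The dynamics input is assumed in the statement, so the hypotheses of Theorem~\ref{thm:linear-heat-retained-contraction} are met. The feature-level failure probability is at most \(\delta_{\rm app}+\delta_{\rm env}\). For the cutoff variant, multiplying by a smooth function vanishing on \(\partial\Omega\) puts each feature in \(H^1_0(\Omega)\), so \(V_\tau\subset H^1_0(\Omega)\). Lemma~\ref{lem:ritz-heat-consistency} then supplies \(\mathcal E_{\rm dyn}(\tfrac12M_{2,\tau}^{\rm G},1,1)\).

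The main obstacle is the approximation step. The cited hypothesis is usually a fixed-target, in-expectation or high-probability statement. One has to check that its constant can be taken uniform over \(\{u^\star(\cdot,t_n)\}\), which is absorbed into \(C_{\rm elu}(u^\star,T)\), and that the union bound costs only the logarithm. I would first try to phrase the hypothesis as a sub-Gaussian (Hilbert-space Hoeffding) deviation for each fixed \(n\), so that the \(\sqrt{\log(1/\delta)/M}\) form is explicit.
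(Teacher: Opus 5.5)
Your proposal is correct and follows essentially the route the paper's statement indicates; the paper defers its proof to the Supplementary Material, which is not shown here. The ingredients match: a union bound over the \(N+1\) analysis times with the triangle inequality against the discarded tail \(\|(I-P_\tau)u_{M,n}^{\rm MC}\|_{L^2(\Omega)}\); the Lipschitz constant \(\max\{1,\alpha\}\) of \(\sigma_\alpha\) for \(G_\Phi\); \(|\sigma_\alpha(z)|\le\alpha+|z|\) with Gaussian norm concentration and a scalar tail under a union bound over \(2M\) events for \(B_{\rm elu}\); and the dynamics input either assumed or supplied by Lemma~\ref{lem:ritz-heat-consistency} for cutoff features. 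One bookkeeping point: placing \(|\xi_j|\le\beta\sqrt{2\log(2M/\delta_{\rm env})}\) at level \(\delta_{\rm env}/(2M)\) requires the sharp two-sided bound \(\Pr(|\xi_j|>\beta t)\le e^{-t^2/2}\) (from \(\Pr(\xi_j>\beta t)\le\tfrac12e^{-t^2/2}\)); the cruder \(2e^{-t^2/2}\) would raise the total failure probability to \(\tfrac32\delta_{\rm env}\).
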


For shallow ELU features, the approximation statement is the RFM input and the
displayed envelopes provide the leverage certificate needed for the Gram bound.

\begin{corollary}[Dirichlet-bubble Gaussian RFF certificate]
\label{cor:bubble-rff-specialization}
On \(\Omega=[0,1]^d\), let
\(\varphi_j(x)=b(x)\sqrt 2\cos(\omega_j\cdot x+b_j)\) with the
bubble \(b(x)=\prod_i x_i(1-x_i)\) and Gaussian
\((\omega_j,b_j)\) as in
Corollary~\ref{cor:gaussian-rff-specialization}. Every feature
satisfies \(\varphi_j|_{\partial\Omega}=0\), so the Galerkin
discretization of the homogeneous-Dirichlet heat operator on
\(H^1_0(\Omega)\) is conforming. For targets in the windowed class
\(\mathcal F_{\rm g,b}(C_u)=\{b\cdot v:v\in\mathcal F_{\rm g}(C_u)\}\),
the approximation bound of
Corollary~\ref{cor:gaussian-rff-specialization} carries through
with the additional multiplicative factor
\(\|b\|_{L^\infty(\Omega)}=4^{-d}\)
(\(\|b\|_{L^\infty}=1/16\) for the 2D experiments of
Section~\ref{sec:numerics}). The leverage envelope and Gram
concentration events inherit from
Corollary~\ref{cor:gaussian-rff-specialization} with a
deterministic Lipschitz adjustment for the bubble factor.
For this conforming space, if the parabolic Ritz quantity
\[
\varepsilon^{\rm G}_{M,\tau}(u^\star)
=
\sup_{0\le t\le T}
\|u^\star(\cdot,t)-R_\tau u^\star(\cdot,t)\|_{L^2(\Omega)}
+
\int_0^T
\|\partial_tu^\star(\cdot,t)-R_\tau\partial_tu^\star(\cdot,t)\|_{L^2(\Omega)}
\,dt
\]
is small on the bubble space and \(M_{2,\tau}^{\rm G}\) is bounded,
Lemma~\ref{lem:ritz-heat-consistency} verifies the dynamics input with
\(C_b=\tfrac12M_{2,\tau}^{\rm G}\), \(p=1\), and \(S_T=1\). This conforming
Galerkin certificate is the route used for the Dirichlet-bubble benchmark.
Applying Theorem~\ref{thm:linear-heat-retained-contraction} gives the same
contraction bound. Its approximation term is
\(\varepsilon^{\rm G}_{M,\tau}(u^\star)\), its estimation term is
\(r\sigma^2/[N_o(1-\eta)]\), and its constants depend on the Galerkin
time-regularity bound. The windowed integral-class bound is a sufficient RFF
approximation certificate; for general homogeneous-Dirichlet targets the
approximation input is the Ritz quantity
\(\varepsilon^{\rm G}_{M,\tau}(u^\star)\), which is reported in
the benchmark.
\end{corollary}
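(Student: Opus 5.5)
The corollary has four parts: conformity, the approximation factor, inheritance of the leverage and Gram events, and the dynamics input through the Ritz route. I would treat them separately and then feed them into Theorem~\ref{thm:linear-heat-retained-contraction}.

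\emph{Conformity.} Each factor \(x_i(1-x_i)\) vanishes on the faces \(x_i\in\{0,1\}\), so every \(\varphi_j\) vanishes on \(\partial\Omega\). Each \(\varphi_j\) is smooth on \(\overline\Omega\), hence \(\varphi_j\in H^1_0(\Omega)\). Any retained combination \(\psi_a=(T_\tau^\top\Phi)_a\) is a finite linear combination of the \(\varphi_j\), so \(V_\tau\subset H^1_0(\Omega)\). This is exactly the conforming hypothesis of Lemma~\ref{lem:ritz-heat-consistency}, and \(\widetilde{\mathbb K}=T_\tau^\top\mathbb K T_\tau\succeq0\) because \(a\) is a positive semidefinite form.

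\emph{Approximation.} Take \(u^\star(\cdot,t_n)=b\,v_n\) with \(v_n\in\mathcal F_{\rm g}(C_u)\). Let \(c_n^{\rm MC}\) be the Monte Carlo coefficients from Corollary~\ref{cor:gaussian-rff-specialization} for \(v_n\); the same coefficients applied to the bubble features give the approximant \(b\,v_{M,n}\). Then
\[
\|bv_n-bv_{M,n}\|_{L^2}\le\|b\|_{L^\infty}\|v_n-v_{M,n}\|_{L^2},
\qquad \|b\|_{L^\infty}=b(\tfrac12,\ldots,\tfrac12)=4^{-d}.
\]
On the same event of probability at least \(1-\delta_{\rm app}\), this yields the Monte Carlo term multiplied by \(4^{-d}\). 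The discarded tail is defined intrinsically through the bubble mass matrix, so it is the corresponding \(\varepsilon_{\rm disc}(\tau)\) for this draw, and the triangle inequality with \(P_\tau\) gives the stated bound.

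\emph{Leverage and Gram.} For the grid certificate, the raw envelopes change as follows. The function bound becomes \(E_\Phi\le 2M\|b\|_\infty^2\). The gradient bound follows from
\[
|\nabla\varphi_j|\le|\nabla b|\sqrt2+\|b\|_\infty\sqrt2\|\omega_j\|,
\]
so \(G_\Phi\) picks up the deterministic additive Lipschitz term \(\|\nabla b\|_\infty^2\), up to a factor 2. With this certified \(L_\star\), Lemma~\ref{lem:time-uniform-retained-gram} applies verbatim, since it only uses Construction~\ref{ass:mass-whitened-rfm} and \(\mathcal E_{\rm lev}(L_\star)\).

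\emph{Dynamics input and conclusion.} Invoke Lemma~\ref{lem:ritz-heat-consistency} with the Galerkin reference \(z_n^\dagger=z^{\rm G}_\tau(t_n)\). This gives
\[
\sup_n\|u^\star(\cdot,t_n)-\Psi^\top z_n^\dagger\|_{L^2}\le\varepsilon^{\rm G}_{M,\tau}(u^\star)
\]
and \(\|b_{n+1}\|_2\le\tfrac12M^{\rm G}_{2,\tau}(\Delta t)^2\). Substituting \(\varepsilon^{\rm G}_{M,\tau}\) for \(\varepsilon_{M,\tau}\) in Assumption~\ref{ass:retained-approximation}, the residual bound meets the form in Assumption~\ref{ass:retained-dynamics-consistency} with \(C_b=\tfrac12M^{\rm G}_{2,\tau}\) and \(p=1\). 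Since \(\|F\|_2\le1\) for \(F=(I+\Delta t\widetilde{\mathbb K})^{-1}\), we get \(S_T=1\), so \(\mathcal E_{\rm dyn}(\tfrac12M^{\rm G}_{2,\tau},1,1)\) holds. Theorem~\ref{thm:linear-heat-retained-contraction} then gives the risk bound with approximation term \(\varepsilon^{\rm G}_{M,\tau}\), estimation term \(r\sigma^2/[N_o(1-\eta)]\), and \(C_T\) depending on \(T\) and \(M^{\rm G}_{2,\tau}\).

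\emph{Main obstacle.} I expect the main difficulty to be the leverage step. Whitening by \(\Lambda_\tau^{-1/2}\) amplifies small-mass directions, so the pointwise bound on \(\|\Psi\|_2^2\) depends on \(\tau\lambda_1\). The bubble lowers \(\lambda_1\) by roughly \(\|b\|_{L^2}^2\), and the ratio of envelope to mass must be tracked carefully in the grid certificate. I would handle this by applying the Supplementary grid lemma as a black box with the modified \((E_\Phi,G_\Phi)\), keeping the \(\tau\)-dependence inside \(L_\star\) rather than bounding it explicitly.
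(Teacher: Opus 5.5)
Your proposal is correct and takes essentially the route the paper indicates. The steps are: boundary vanishing of the bubble for \(H^1_0\)-conformity; the factor \(\|b\|_{L^\infty}=4^{-d}\) from reusing the same Monte Carlo coefficients; product-rule envelopes fed into the grid leverage certificate, after which Lemma~\ref{lem:time-uniform-retained-gram} applies unchanged; and Lemma~\ref{lem:ritz-heat-consistency} with \(\|F\|_2\le1\) to verify \(\mathcal E_{\rm dyn}(\tfrac12M_{2,\tau}^{\rm G},1,1)\) before invoking Theorem~\ref{thm:linear-heat-retained-contraction} with \(\varepsilon^{\rm G}_{M,\tau}\) as the approximation constant. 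One bookkeeping correction: summing your pointwise gradient bound over \(j\) gives \(G_\Phi\le 2M\|\nabla b\|_{L^\infty}^2+2\|b\|_{L^\infty}^2\sum_j\|\omega_j\|_2^2\), so the Lipschitz adjustment is of order \(M\|\nabla b\|_{L^\infty}^2\) and the Gaussian part is also rescaled by \(\|b\|_{L^\infty}^2\); this does not affect the argument.
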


This is the boundary-compatible route used in the two-dimensional heat
benchmark: the bubble factor enforces the homogeneous Dirichlet trace, so the
conforming Ritz certificate is the relevant dynamics input.

The deterministic sine-basis identity used in the one-dimensional experiment
is recorded in the Supplementary Material. The proofs of the feature-family
corollaries are also given there.

\medskip

Together, the effective coordinates, proof certificates, and
heat-consistency estimates yield a quantitative posterior-contraction bound
for the linear heat equation. The Supplementary Material records conditional
analogues for Caputo subdiffusion and a linearized Darcy inverse problem. The
next section verifies the terms in the bound numerically and compares the
resulting filter with the ensemble-transform Kalman filter.

\section{Numerical Experiments}
\label{sec:numerics}

The experiments are organized around one question: does the coefficient-space
posterior analysis in Theorem~\ref{thm:linear-heat-retained-contraction}
predict the numerical behavior that matters in streaming PDE data
assimilation? We first isolate the three terms in the theorem,
effective-rank feature approximation, time-discretization bias, and the
\(r\sigma^2/N_o\) Bayesian-estimation term. We then test the resulting
filter in an under-observed two-dimensional heat benchmark, assess its
posterior calibration and \(N_o\)-scaling, and show why boundary-compatible
random features are needed to control deterministic PDE bias. A final
non-self-adjoint advection-diffusion benchmark tests the same exact
coefficient-space filtering mechanism beyond the self-adjoint heat theorem.
The Supplementary Material reports additional linear-Gaussian variations and
supporting numerical details.
Throughout this section, \(N_t\) denotes the number of analysis times in
a benchmark; the theory uses \(N\) for the same role.

\subsection{Three error mechanisms}
\label{sec:exp-verification}

We begin by separating the mechanisms that are coupled in the full streaming
benchmark. In an orthonormal sine basis, the approximation tail, the
implicit-Euler bias, and the Kalman variance can be varied one at a time. This
establishes the expected slopes before non-orthogonal random features and
random observation geometry are introduced.

\paragraph{Rates on the 1D heat equation}
The 1D heat equation in the orthonormal sine basis is the cleanest
testbed: the empirical Gram is exact by the standard DST orthogonality
identity, so the three terms in the contraction bound
can be exercised one at a time. Sweeping \(M\) with all other
parameters fixed shows that the approximation term takes on the
regularity of the data rather than a worst-case Sobolev rate; sweeping
\(\Delta t\) recovers the predicted \(p=1\) order of implicit Euler;
and sweeping \(N_o\) and \(\sigma\) recovers the predicted linear scaling
of the posterior variance with the observation-noise-to-information
ratio. The measured slopes confirm the predicted scalings in this controlled
regime.

\begin{figure}[t]
\centering
\begin{minipage}{0.327\linewidth}
\includegraphics[width=\linewidth]{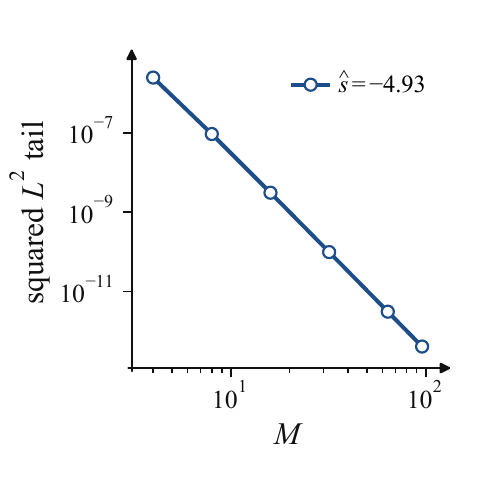}
\end{minipage}\hspace{0.006\linewidth}%
\begin{minipage}{0.327\linewidth}
\includegraphics[width=\linewidth]{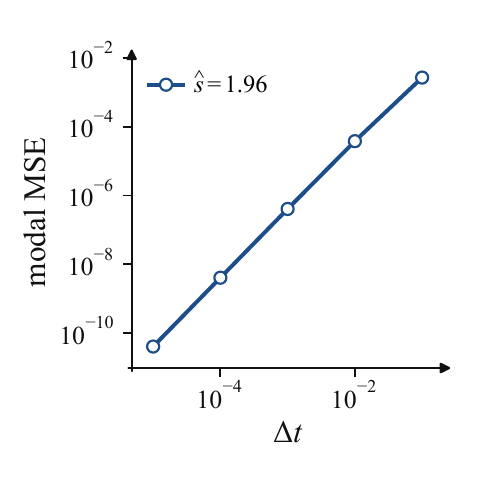}
\end{minipage}\hspace{0.006\linewidth}%
\begin{minipage}{0.327\linewidth}
\includegraphics[width=\linewidth]{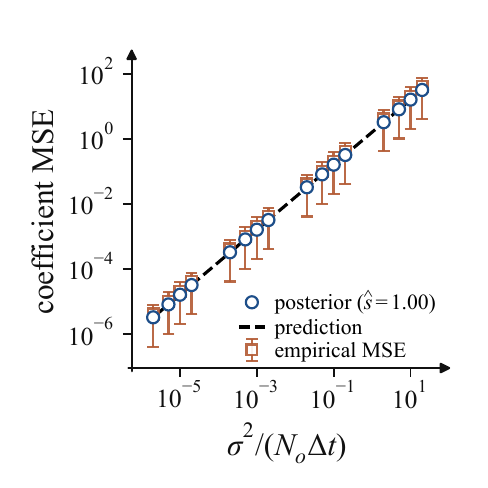}
\end{minipage}
\caption{The three predicted scalings on the 1D heat equation. Left:
sine-basis truncation tail, slope \(-4.93\). Center: implicit-Euler modal
mean-square error, slope \(1.96\). Right: streaming Kalman variance versus
\(\sigma^2/(N_o\Delta t)\), slope \(0.9998\).}
\label{fig:heat-rates}
\end{figure}

\paragraph{Mass whitening for non-orthogonal random features}
The sine basis is exceptional because both its continuum mass matrix and its
empirical Gram are the identity. For ELU and Gaussian RFF features,
Construction~\ref{ass:mass-whitened-rfm} replaces the raw coefficient state by
a mass-orthonormal effective coordinate. Across representative floor-selected
draws, the empirical Gram fluctuation has fitted slopes between \(-0.46\) and
\(-0.61\), while the heat-Kalman posterior trace has slopes between \(-1.01\)
and \(-1.12\). Thus the random-feature experiments recover the same
\(N_o^{-1/2}\) Gram and \(N_o^{-1}\) posterior-trace scalings after mass
whitening. The floor-selection values are reported in the Supplementary
Material.

The theorem treats \(\tau\) as fixed before the probability statement. When
\(\tau\) is selected from a finite grid, a union bound over the
candidate floors gives the corresponding finite-grid certificate. In the
experiments this selection is used as a numerical conditioning check.

The matrix-Chernoff condition is a transparent sufficient condition for the
theorem. It is not intended as a sharp experimental threshold. For the
reference \(M=80\), \(\tau=10^{-6}\) draw used below, the realized
dense-grid leverage is about \(875\), while the certified envelope is
\(5.3\times10^5\). The observed \(1/N_o\) regime begins at much smaller
observation counts because the realized leverage and accumulated filtering
information are substantially better than the worst-case certificate.
Quadrature assembly was
checked against order-128 Gauss--Legendre
tensor quadrature; the order-24 rule used below gives relative Frobenius
errors below \(10^{-13}\) in both mass and stiffness matrices for the
reference frequency scale.

\subsection{Under-observed streaming heat}
\label{sec:exp-benchmark}
\label{sec:heat-enkf-benchmark}

This benchmark is intentionally in the practically important under-observed
regime \(N_o<r\). It is therefore outside the per-time Gram condition used in
Theorem~\ref{thm:linear-heat-retained-contraction}, but it probes the
accumulated-observability mechanism described in
Remark~\ref{rem:accumulated-observability}: each analysis time is
under-observed, while information is accumulated through the filtering dynamics
over the observation window. The benchmark uses the 2D heat equation on
\([0,1]^2\), \(M=120\) Dirichlet-bubble Gaussian RFFs, effective rank
\(r=61\), \(N_t=20\) analysis times, \(N_o=24\) random interior observations
per time, and \(\sigma=10^{-2}\). Observations are generated from the
effective-rank heat evolution of a projected out-of-prior two-bump initial
field; the Supplementary Material reports the corresponding unprojected-truth
comparison.

The comparison isolates posterior computation. RFM-Kalman and the offline
MAP+Laplace solve compute the same effective-rank Gaussian posterior, while
ETKF approximates that posterior by a deterministic square-root ensemble with
fixed inflation \(\rho=1.05\).

\begin{table}[t]
\centering
\footnotesize
\setlength{\tabcolsep}{4pt}
\begin{tabular}{lrrr}
\toprule
Method (Dirichlet-bubble Gaussian RFF, \(r=61\)) & Field \(L^2\) error & Trace & Wall time \\
\midrule
RFM-Kalman   & \((3.44\pm 1.0)\!\times\!10^{-4}\) & \(1.84\!\times\!10^{-7}\) & \(3.06~\text{ms}\) \\
MAP+Laplace  & \((3.44\pm 1.0)\!\times\!10^{-4}\)         & \(1.84\!\times\!10^{-7}\) & \(0.83~\text{ms}\) \\
\midrule
\multicolumn{4}{l}{\emph{ETKF with multiplicative inflation \(\rho=1.05\):}} \\
ETKF, \(N_e=50\)   & \((1.27\pm 0.52)\!\times\!10^{-3}\) & \(4.84\!\times\!10^{-7}\) & \(6.13~\text{ms}\) \\
ETKF, \(N_e=100\)  & \((3.82\pm 0.96)\!\times\!10^{-4}\) & \(5.09\!\times\!10^{-7}\) & \(15.1~\text{ms}\) \\
ETKF, \(N_e=200\)  & \((3.82\pm 0.96)\!\times\!10^{-4}\) & \(5.09\!\times\!10^{-7}\) & \(64.3~\text{ms}\) \\
ETKF, \(N_e=500\)  & \((3.82\pm 0.96)\!\times\!10^{-4}\) & \(5.09\!\times\!10^{-7}\) & \(464~\text{ms}\) \\
ETKF, \(N_e=1000\) & \((3.82\pm 0.96)\!\times\!10^{-4}\) & \(5.09\!\times\!10^{-7}\) & \(2.31~\text{s}\) \\
\bottomrule
\end{tabular}
\caption{Streaming 2D heat-DA benchmark in the under-observed regime
\(N_o<r\). RFM-Kalman and MAP+Laplace use 16 truth seeds; each ETKF row uses
16 truth seeds and 3 ensemble seeds per truth. Times are solver-core wall
times for the 20-step filtering sequence.}
\label{tab:heat-enkf-bench}
\end{table}

For \(N_e\ge100\), larger ensembles change the reported mean error only below
the displayed precision. The visible plateau is therefore a full-rank ensemble
effect for this coefficient state, not an indication that the ETKF cost has
stopped increasing.

\begin{figure}[t]
\centering
\includegraphics[width=0.92\linewidth]{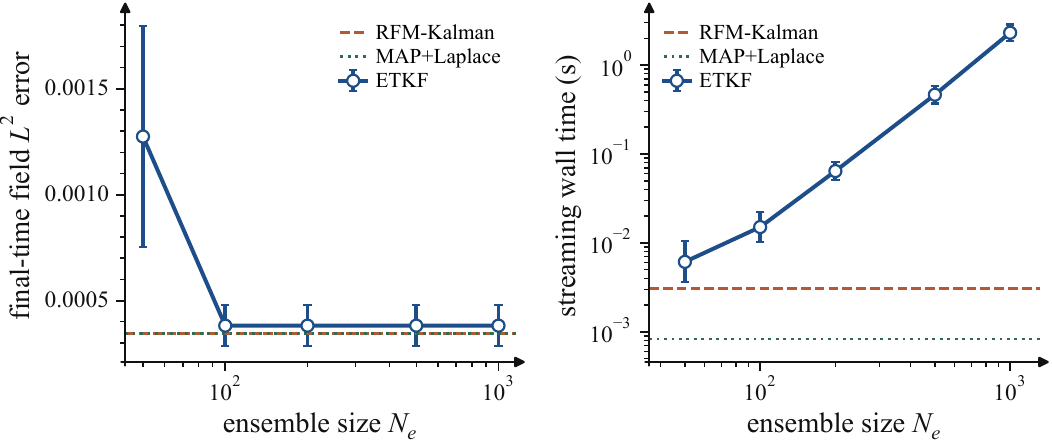}
\caption{Streaming 2D heat-DA benchmark. Left: final-time field
\(L^2\)-error. Right: solver-core wall time for the 20-step sequence,
excluding common basis assembly and observation generation. ETKF cost grows
with \(N_e\), while RFM-Kalman and the batched MAP+Laplace check compute the
effective-rank linear-Gaussian posterior at fixed rank. The benchmark is
intentionally outside the per-time full-rank Gram condition and tests
accumulated observability over the filtering window.}
\label{fig:heat-enkf-bench}
\end{figure}

The timing comparison separates online filtering from the offline batched
regression check. MAP+Laplace is the fastest core solve here because it solves
one stacked linear-Gaussian problem; RFM-Kalman maintains the filtering state at
each observation time and costs \(O(N_or^2+r^3)\) per analysis step. ETKF pays
for ensemble propagation and analysis transforms. Times were measured with
Python \texttt{time.perf\_counter} in the same script on an Apple M3 Pro
(12 cores), macOS 15.0.1, and Python 3.12.2. This benchmark isolates the
linear heat regime in which the effective-rank linear-Gaussian model is closed form.

\subsection{Posterior uncertainty and information scaling}
\label{sec:calibration}

A Bayesian estimate must be judged by both its posterior mean and its
uncertainty. After the mean-error comparison above, we therefore ask whether
the recovered coefficient posterior has the right field-level uncertainty
scale. We evaluate pointwise calibration on the same
Gauss--Legendre quadrature grid used to assemble the continuum
matrices, and compute pointwise standardized residuals
\[
Z_k=\frac{u^{\rm true}_{T,k}-\mathbb E[u_T(x_k)]}
{\sqrt{\operatorname{Var}[u_T(x_k)]}}.
\]
For each truth seed we compute the pointwise empirical coverage fraction over
quadrature nodes. RFM-Kalman gives mildly conservative pointwise coverage:
the mean coverage fractions over truth seeds are \(57.6\%\), \(94.0\%\), and
\(100.0\%\) for nominal \(50\%\), \(90\%\), and \(99\%\) intervals,
respectively. MAP+Laplace gives the same calibration up to sampling error,
as expected from the shared Gaussian posterior. ETKF coverage is more
conservative under the fixed inflation \(\rho=1.05\).

\subsubsection{\texorpdfstring{\(N_o\)-scaling}{N\_o-scaling}: the \texorpdfstring{\(1/N_o\)}{1/N\_o} rate}
\label{sec:sigma-no-sweep}

The coefficient-space covariance mechanism in
Theorem~\ref{thm:linear-heat-retained-contraction} predicts posterior trace
decay proportional to \(r\sigma^2/N_o\) at fixed \(r\) and \(\sigma\). We sweep
\(\sigma\in\{10^{-3},10^{-2},10^{-1}\}\) and
\(N_o\in\{6,12,24,48,96\}\) on the reference benchmark, then extend
the \(\sigma=10^{-2}\) sweep to \(N_o=768\). The pre-asymptotic slopes
are steeper than \(-1\), while the high-information tail recovers the same
\(N_o^{-1}\) information scaling as the coefficient-space covariance
mechanism in Theorem~\ref{thm:linear-heat-retained-contraction}.
In the extended tail \(N_o\in\{96,192,384,768\}\) at \(\sigma=10^{-2}\), the
posterior-trace slopes are \(-1.024\) for RFM-Kalman and \(-1.021\) for ETKF,
both with \(R^2\) essentially one. Thus the numerical information scaling
matches the \(1/N_o\) covariance mechanism, although this benchmark remains
outside the theorem's per-time Gram condition and is interpreted through
Remark~\ref{rem:accumulated-observability}. The Supplementary Material records
the corresponding pre-asymptotic slope ranges.

\subsection{Boundary compatibility controls deterministic bias}
\label{sec:exp-basis-design}
\label{sec:basis-comparison}

On a rectangle with homogeneous Dirichlet conditions, a deterministic
sine basis is the natural spectral baseline. We therefore compare
three effective-rank states at \(r\approx60\): vanilla cosine Gaussian
RFFs, Dirichlet-bubble Gaussian RFFs, and the 2D Dirichlet sine basis.
The bubble factor closes the boundary-condition mismatch of the
vanilla RFF basis and makes the random-feature state competitive with
the spectral basis at matched rank. This experiment isolates deterministic
bias rather than searching for the fastest basis: it tests whether the feature
space respects the PDE boundary geometry strongly enough for the Bayesian
estimation term to dominate.

\begin{table}[t]
\centering
\footnotesize
\setlength{\tabcolsep}{4pt}
\begin{tabular}{lccc}
\toprule
Basis & In-basis residual & Field \(L^2\) error & Posterior trace \\
\midrule
Vanilla Gaussian RFF (\(r{=}60\))      & \(2.4\!\times\!10^{-2}\) & \((9.0\pm 2.9)\!\times\!10^{-4}\) & \(1.0\!\times\!10^{-6}\) \\
Bubble Gaussian RFF (\(r{=}60\))       & \(9.6\!\times\!10^{-3}\) & \((3.4\pm 1.1)\!\times\!10^{-4}\) & \(1.8\!\times\!10^{-7}\) \\
2D Dirichlet sine (\(r{=}60\))         & \(5.1\!\times\!10^{-3}\) & \((3.5\pm 1.3)\!\times\!10^{-4}\) & \(1.9\!\times\!10^{-7}\) \\
\bottomrule
\end{tabular}
\caption{Streaming heat-DA at matched effective rank and observation design.
The vanilla and sine rows are paired on identical observation streams; the
bubble row uses the boundary-compatible RFF setting used in the benchmark. The
Dirichlet-bubble basis and the sine basis agree at the few-percent level
in field error and posterior trace, while the vanilla cosine RFF basis is
approximately \(2.6\times\) worse than the sine basis in field error.}
\label{tab:basis-comparison}
\end{table}

\paragraph{Unprojected-truth comparison}
\label{sec:unprojected-truth}
The main benchmark error is measured against the effective-rank in-basis
truth so that RFM-Kalman, MAP+Laplace, and ETKF are compared on the
same coefficient state. A separate unprojected-truth comparison,
reported in the Supplementary Material, shows that the bubble basis and
the sine basis have the same final-time error against a high-resolution
Dirichlet-sine reference, while the vanilla cosine RFF basis incurs a
\(7\times10^{-2}\) deterministic dynamics-bias plateau from its natural
(Neumann-like) boundary condition. This is precisely the bias term
isolated in Lemma~\ref{lem:deterministic-retained-dynamics-bias}. In the
boundary-compatible bubble-RFF run the corresponding dynamics-bias term is
\(5.7\times10^{-6}\), and the implicit-Euler Ritz/Galerkin analogue from
Lemma~\ref{lem:ritz-heat-consistency} is \(1.2\times10^{-5}\), both below the
coefficient posterior-estimation error. The supplement also reports the continuous-time
parabolic Ritz quantity \(\varepsilon^{\rm G}_{M,\tau}\), which includes the
time-derivative approximation entering the theorem. These values certify
the dynamics-consistency input for the benchmark. The RFF construction is valuable
because the same effective-rank random-feature procedure can be paired with a
problem-specific zero-trace cutoff on domains or operators without analytic
eigenfunctions.

\subsection{Beyond self-adjoint heat}
\label{sec:advdiff-benchmark}

The heat contraction theorem treats the self-adjoint parabolic case.
The coefficient-space construction itself is broader: after the random
features are frozen, any fixed linear-Gaussian Galerkin model has an exact
Kalman posterior. Having completed the heat-equation checks, we test this
mechanism in a more demanding linear PDE by repeating the streaming benchmark
for
\[
\partial_t u+b\cdot\nabla u=\nu\Delta u,\qquad u|_{\partial\Omega}=0,
\]
on \([0,1]^2\), with \(b=(4,2)\), \(\nu=1\), and P\'eclet number
\(\|b\|/\nu\approx4.47\). The advection term makes the assembled generator
non-self-adjoint, so the sine basis no longer diagonalizes the dynamics. This
experiment is outside the self-adjoint heat theorem, but remains inside the
exact frozen-feature linear-Gaussian coefficient framework. As a diagnostic,
the implicit-Euler propagator has spectral radius \(0.953\); a theorem-level
extension in this non-normal setting would require a spectral-norm or
finite-horizon product bound.

We use \(M=80\) Dirichlet-bubble Gaussian RFFs, mass floor
\(\tau=10^{-6}\), effective rank \(r=51\), \(N_t=20\) analysis times,
\(N_o=24<r\) random observations per time, and \(\sigma=10^{-2}\). Across 16
truth seeds, RFM-Kalman and the offline MAP+Laplace solve again agree to
roundoff at the posterior level. The RFM-Kalman final-time field error is
\((3.54\pm1.04)\times10^{-4}\), with trace \(1.67\times10^{-7}\) and
solver-core time \(2.39\) ms. The largest ETKF run, \(N_e=1000\), gives
\((3.89\pm1.05)\times10^{-4}\), trace \(4.72\times10^{-7}\), and
\(2.35\) s core time. Thus the exact coefficient-space recursion retains the
same posterior advantage in a non-normal streaming regime, while ETKF pays the
ensemble propagation cost.

\begin{figure}[t]
\centering
\includegraphics[width=0.92\linewidth]{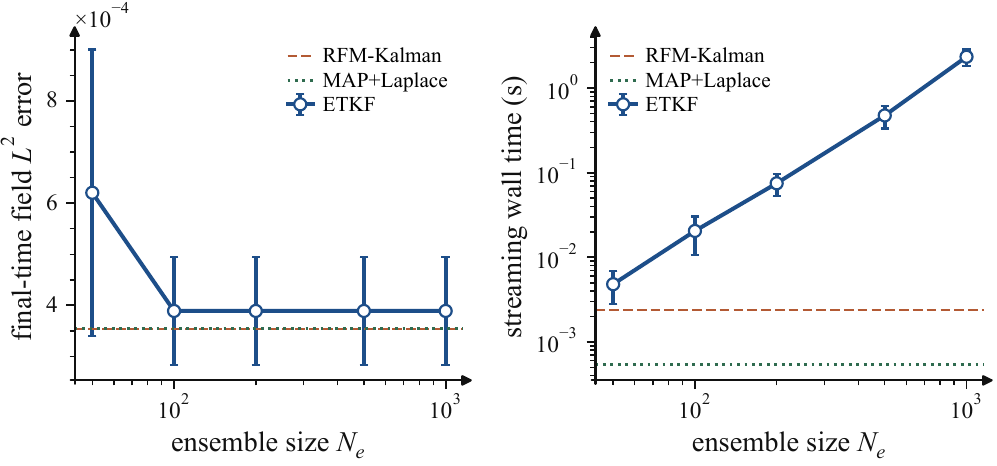}
\caption{Streaming 2D advection-diffusion benchmark with non-self-adjoint
generator at P\'eclet number \(\approx4.47\). Left: final-time field
\(L^2\)-error. Right: solver-core wall time for the 20-step sequence.
RFM-Kalman and MAP+Laplace compute the same effective-rank Gaussian posterior;
ETKF approaches the same error level only with large ensembles and substantially
higher core time.}
\label{fig:advdiff-benchmark}
\end{figure}

\section{Discussion}
\label{sec:discussion}

The framework rests on one structural observation: after the random features
are frozen, linear PDE data assimilation becomes a linear-Gaussian filtering
problem in coefficient space. The Kalman recursion is therefore the exact
posterior for the chosen coefficient model, while mass-whitened effective-rank
coordinates make non-orthogonal feature draws analytically usable. For heat,
the contraction theorem separates coefficient-space posterior uncertainty
from deterministic feature approximation and time-discretization errors; the
experiments exercise the same decomposition, check posterior calibration, and
show that a Dirichlet-bubble Gaussian RFF basis reaches the same \(10^{-4}\)
field-error level as a matched Dirichlet-sine basis.

The theorem is a finite-horizon result for homogeneous heat with Gaussian
noise. Its assumptions are certificates: implicit-Euler stability,
effective-rank dynamics consistency, and empirical Gram concentration for
i.i.d.\ uniform sensors. The per-time Gram condition is a sufficient
high-information certificate, not a necessary observability condition; the
two-dimensional benchmarks probe the practically important \(N_o<r\) regime
where information accumulates through the filtering window. The
advection-diffusion experiment confirms the same exact coefficient posterior
for a non-self-adjoint linear generator, but a contraction theorem in that
setting would require a propagator-adapted stability and observability
certificate.

\paragraph{Noise level, SNR, and non-Gaussian observations}
The Gaussian observation model is used here because it makes the frozen-feature
coefficient model conjugate, so that the Kalman recursion is the exact Bayesian
posterior and the covariance trace has the interpretation used in the theorem.
Lower-SNR regimes are not excluded; they increase the Bayesian-estimation term
unless one adds observations, lengthens the assimilation window, lowers the
effective rank, or uses a more informative prior. With non-Gaussian errors, the
same linear coefficient model still supports Kalman-type second-moment or
approximate filters for mean-zero finite-covariance noise, but exact Gaussian
posteriority and the present contraction proof would require additional robust
or non-Gaussian analysis.

The main implication is that random-feature PDE filtering can be treated as a
closed-form coefficient-space Bayesian update with effective dimension \(r\),
not as an ensemble approximation or a MAP-only regression. The online update
still contains explicit \(N_o\)-dependent linear algebra, but its state
dimension is set by the effective feature rank rather than by sensors or
collocation points. Natural extensions include stochastic model-error
formulations for the effective-rank truncation and iterated
linear-Gaussian updates for nonlinear PDEs, using the same mass-whitened
coefficient geometry.

\appendix
\section{Auxiliary Proofs}
\label{app:aux-proofs}

\paragraph{Proof of Lemma~\ref{lem:time-uniform-retained-gram}}
Condition on the mass-whitened feature draw. For a fixed analysis time,
Lemma~\ref{lem:retained-gram-concentration} gives
\[
\Pr\left[
\left\|N_o^{-1}H_n^\top H_n-I_r\right\|_2>\eta
\;\middle|\;\Psi
\right]
\le \frac{\delta_{\rm gram}}{N+1}
\]
provided
\[
N_o\ge C L_\star\eta^{-2}
\log\frac{2r(N+1)}{\delta_{\rm gram}},
\]
after increasing the universal constant \(C\) if necessary. A union bound
over \(n=0,\ldots,N\) gives the stated time-uniform event. If the same
observation locations are reused at every analysis time, then all \(H_n\) are
the same matrix in the time-independent effective basis, so the single-time
Gram bound already gives the uniform statement with
\(\log(2r/\delta_{\rm gram})\).

\paragraph{Proof sketch for Lemmas~\ref{lem:implicit-euler-heat-consistency}
and~\ref{lem:ritz-heat-consistency}}
Both estimates are deterministic consistency inputs for
Theorem~\ref{thm:linear-heat-retained-contraction}. In the graph-residual
route, one applies backward Taylor expansion to the projected heat trajectory,
uses \(L^2\)-orthonormality of the effective features, and bounds the generator
commutator by the assumed graph-domain residual, giving an
\(O(\Delta t\,\varepsilon_{M,\tau})+O((\Delta t)^2)\) one-step residual. In the
conforming route, Ritz orthogonality gives the parabolic Galerkin
approximation bound, and backward Taylor expansion of the Galerkin coefficient
trajectory gives the \(O((\Delta t)^2)\) residual. The Supplementary Material
contains the full auxiliary proof details and the remaining feature-certificate
arguments.

\bibliographystyle{siam}
\bibliography{references}

\end{document}